\newtheorem{theorem}{Theorem}
\theoremstyle{plain}
\newtheorem{lemma}{Lemma}
\numberwithin{equation}{section}
\begin{document}
\title[An elliptic system with
logarithmic nonlinearity]{An elliptic system with
logarithmic nonlinearity}
\subjclass{35J75; 35J48; 35J92}
\keywords{Bifurcation; p(x)-Laplacian; Singular system;  Sub-supersolution}

\begin{abstract}
In the present paper we study the existence of solutions for some classes of
singular system involving the $\Delta _{p(x)}$ and $\Delta _{q(x)}$
Laplacian operators. The approach is based on bifurcation theory and
sub-supersolution method for systems of quasilinear equations involving
singular terms.
\end{abstract}

\author{Claudianor O. Alves}
\address{Claudianor O. Alves -
Unidade Acad\^{e}mica de Matem\'{a}tica \\
Universidade Federal de Campina Grande\\
Av. Aprigio Veloso, 882 \\
CEP:58429-900, Campina Grande - PB \\
Brazil}
\email{coalves@mat.ufcg.edu.br}
\thanks{C.O. Alves was partially supported by CNPq/Brazil 304036/2013-7 and
INCT-MAT}
\author{Abdelkrim Moussaoui}
\address{Abdelkrim Moussaoui -
Biology Department \\
A. Mira Bejaia University \\
Targa Ouzemour, 06000 Bejaia \\
Algeria}
\email{abdelkrim.moussaoui@univ-bejaia.dz}
\thanks{A. Moussaoui was supported by CNPq/Brazil 402792/2015-7.}
\author{Leandro da S. Tavares}
\address{Leandro da S. Tavares -
Universidade Federal do Cariri \\
Av. Ten. Raimundo Rocha s/n \\
CEP:63048-080, Juazeiro do Norte - CE
\\
Brazil}
\email{lean.mat.ufca@gmail.com}
\maketitle

\section{Introduction and statement of the main results}

\label{Section-2}

Let $\Omega \subset {\mathbb{R}}^{N}$ $(N\geq 2)$ be a bounded domain with
smooth boundary $\partial \Omega $. We are interested in the following
quasilinear system%
\begin{equation}
\left\{ 
\begin{array}{ll}
-\Delta _{p(x)}u=-\gamma \log v+\theta v^{\alpha (x)} & \ \mbox{in}\ \Omega ,
\\ 
-\Delta _{q(x)}v=-\gamma \log u+\theta u^{\beta (x)} & \ \mbox{in}\ \Omega ,
\\ 
u,v>0 & \ \mbox{in}\ \Omega , \\ 
u=v=0 & \ \mbox{on}\ \partial \Omega ,%
\end{array}%
\right.  \label{p}
\end{equation}%
which exhibits a singularity at zero through logarithm function. The
variable exponents $\alpha (.),\beta (.)$ are positive, the constants $%
\gamma ,\theta >0$ and $\Delta _{p(x)}$ (resp. $\Delta _{q(x)}$) stands for
the $p(x)$-Laplacian (resp. $q(x)$-Laplacian) differential operator on $%
W_{0}^{1,p(x)}(\Omega )$ (resp. $W_{0}^{1,q(x)}(\Omega )$) with $p,q\in
C^{1}(\overline{\Omega }),$%
\begin{equation}
p^{\prime }(x)\leq p(x)^{\ast }\text{, }q^{\prime }(x)\leq q(x)^{\ast }\text{
\ and }\left\{ 
\begin{array}{l}
1<p^{-}\leq p^{+}<N \\ 
1<q^{-}\leq q^{+}<N,%
\end{array}%
\right.  \label{H1}
\end{equation}%
where $p(x)^{\ast }=\frac{Np(x)}{N-p(x)}$ and $q(x)^{\ast }=\frac{Nq(x)}{%
N-q(x)}$. In the sequel we denote by%
\begin{equation*}
\begin{array}{l}
s^{-}=\inf_{x\in \Omega }s(x)\text{ \ and \ }s^{+}=\sup_{x\in \Omega }s(x)%
\text{ for }s\in C(\overline{\Omega }).%
\end{array}%
\end{equation*}

Throughout this paper, we denote by $\mathcal{M}\subset C^{1}(\overline{%
\Omega })\times C^{1}(\overline{\Omega })$ the pair of functions $(u,v)\in
C^{1}(\overline{\Omega })\times C^{1}(\overline{\Omega })$ such that there
is a constant $c>0$, which depends on $u$ and $v$, verifying 
\begin{equation}\label{dist-prop}
u(x),v(x)\geq cd(x)\text{ \ in }\Omega ,
\end{equation}%
where $d(x):=dist(x,\partial \Omega ).$

A weak solution of (\ref{p}) is a pair $(u,v)\in W_{0}^{1,p(x)}(\Omega
)\times W_{0}^{1,q(x)}(\Omega ),$ with $u,v$ being positive a.e. in $\Omega $
and satisfying%
\begin{equation*}
\left\{ 
\begin{array}{c}
\int_{\Omega }|\nabla u|^{p(x)-2}\nabla u\nabla \phi \ dx=\int_{\Omega
}(-\gamma \log v+\theta v^{\alpha (x)})\phi \ dx \\ 
\int_{\Omega }|\nabla v|^{q(x)-2}\nabla v\nabla \psi \ dx=\int_{\Omega
}(-\gamma \log u+\theta u^{\beta (x)})\psi \ dx,%
\end{array}%
\right.
\end{equation*}%
for all $(\phi ,\psi )\in W_{0}^{1,p(x)}(\Omega )\times
W_{0}^{1,q(x)}(\Omega ).$\bigskip

The study of problems involving variable exponents growth conditions is
widely justified with many physical examples and arise from a variety of
nonlinear phenomena. They are used in electrorheological fluids as well as
in image restorations. For more inquiries on modeling physical phenomena
involving $p(x)$-growth condition we refer to \cite%
{Acerbi1,Acerbi2,Antontsev,CLions,Chen,MR, R*, RR*,Ru,Re*}.

Elliptic problems involving the logarithmic nonlinearity appear in some
physical models like in dynamic of thin films of viscous fluids, see for
instance \cite{Ferreira-Queiroz}. An interesting point regarding these
problems comes out from the fact that $-\log x$ is of sign changing and
behaving at the origin like the power function $t^{\alpha }$ for $\alpha <0$
with a slow growth. In addition, the logarithmic function is not invariant
by scaling which does not occur with the power function. These facts
motivated the recent studies in \cite{MQ}, \cite{Queiroz} and \cite%
{Ferreira-Queiroz}, where the authors considered the scalar semilinear case
of (\ref{p}) (that is, $p(x)=q(x)=2$) with constant exponents and by
essentially using the linearity of the principal part. We also mention \cite%
{MY}, focusing on problem with constant exponents involving nonlinear
operator.

The essential point in this work is that the singularity in system (\ref{p})
comes out through logarithmic nonlinearities involving variable exponents
growth conditions. According to our knowledge, it is for the first time when
such problems are studied. Our main results provide the existence and
regularity of (positive) solutions for problem (\ref{p}). They are stated as
follows.

\begin{theorem}
\label{T1}Assume (\ref{H1}) holds. Then

\begin{description}
\item[\textrm{(i)}] If 
\begin{equation}
0<\alpha ^{-}\leq \alpha ^{+}<q^{-}-1,\text{ \ }0<\beta ^{-}\leq \beta
^{+}<p^{-}-1  \label{H2}
\end{equation}%
problem (\ref{p}) has a solution $(u,v)\in \mathcal{M}$ for all $\theta
,\gamma >0$.

\item[\textrm{(ii)}] If 
\begin{equation}
\alpha ^{-}>q^{+}-1,\text{ \ }\beta ^{-}>p^{+}-1,  \label{hip-superlinear}
\end{equation}%
problem (\ref{p}) has a solution $(u,v)\in \mathcal{M}$ for $\gamma $ small
enough and for all $\theta >0$.

\item[\textrm{(iii)}] If 
\begin{equation}
\alpha ^{+}>q^{-}-1\ \ \text{and \ }\beta ^{+}>p^{-}-1,  \label{c}
\end{equation}%
problem (\ref{p}) admits a solution $(u,v)\in \mathcal{M}$ for $\gamma $ and 
$\theta $ small enough.
\end{description}
\end{theorem}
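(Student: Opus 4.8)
The plan is to prove all three items by one and the same device, the sub--supersolution method for the singular system (\ref{p}): I will exhibit an ordered pair $(\underline u,\underline v)\le(\overline u,\overline v)$ of sub- and super-solutions, with \emph{both} pairs lying in $\mathcal M$, so that the solution produced by the method is squeezed between them and hence lies in $\mathcal M$ as well. Note that on $\mathcal M$ one has $cd(x)\le\underline v\le v\le\overline v\le Cd(x)$, so every logarithmic term is meaningful and in fact $\log w\in L^{r}(\Omega)$ for every $r<\infty$ as soon as $cd\le w\le Cd$, since $\log d\in L^{r}(\Omega)$ for all $r<\infty$. Two building blocks will be used: the torsion function $e_{r}$ of the $r(x)$-Laplacian, solving $-\Delta_{r(x)}e_{r}=1$ in $\Omega$, $e_{r}=0$ on $\partial\Omega$, which by the regularity and Hopf-type results for the $r(x)$-Laplacian belongs to $C^{1,\tau}(\overline{\Omega})$ and satisfies $c_{1}d\le e_{r}\le c_{2}d$; and the ``log-corrected'' function $\ell_{r}$, solving $-\Delta_{r(x)}\ell_{r}=1-\log d$ in $\Omega$, $\ell_{r}=0$ on $\partial\Omega$, which, since $1-\log d\in L^{r}(\Omega)$ for all $r<\infty$, again lies in $C^{1,\tau}(\overline{\Omega})$ with $c_{1}'d\le\ell_{r}\le c_{2}'d$; here $r\in\{p,q\}$.

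For the subsolution I would take $\underline u=\kappa e_{p}$, $\underline v=\kappa e_{q}$ with $\kappa>0$ small. A direct computation gives $-\Delta_{p(x)}(\kappa e_{p})=\kappa^{p(x)-1}\bigl(1-(\log\kappa)\,\nabla p\cdot|\nabla e_{p}|^{p(x)-2}\nabla e_{p}\bigr)$, whose sup-norm is $O(\kappa^{p^{-}-1}(1+|\log\kappa|))\to 0$ as $\kappa\to 0^{+}$, while the right-hand side of the first equation, evaluated over the order interval, is bounded below by a fixed function which equals $+\infty$ near $\partial\Omega$ (because $-\gamma\log\underline v=-\gamma\log(\kappa e_{q})\to+\infty$ there) and, in the interior, by the minimum value of $s\mapsto-\gamma\log s+\theta s^{\alpha(x)}$ over the interval. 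That minimum is positive once $\gamma$ is small (cases (ii)--(iii)); in case (i) it is made positive by choosing the supersolution with $\|\overline v\|_{\infty}$ below the threshold at which the logarithmic part would turn it negative. Hence for $\kappa$ small enough the sub-solution inequalities hold, and symmetrically for $\underline v$ in the second equation.

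The crux is the supersolution. In cases (i)--(ii) I would look for $\overline u=L\,\ell_{p}$, $\overline v=K\,\ell_{q}$; then $-\Delta_{p(x)}\overline u\ge c\,L^{p^{-}-1}(1-\log d)$ (up to a harmless $\log L$ correction, exactly as in Step 2), while over the order interval $-\gamma\log w+\theta w^{\alpha(x)}\le C_{\gamma}(1-\log d)+C\,\theta K^{\alpha^{+}}$ (with $\alpha^{-}$ in place of $\alpha^{+}$ where $\overline v<1$), and symmetrically for the second equation. The super-solution requirement then reduces to the scalar system
\[
c\,L^{p^{-}-1}\ \geq\ C_{\gamma}+C\,\theta\,K^{\alpha^{+}},\qquad c\,K^{q^{-}-1}\ \geq\ C_{\gamma}+C\,\theta\,L^{\beta^{+}},
\]
to be solved in positive constants $K,L$, the ordering $\underline u\le\overline u$, $\underline v\le\overline v$ being then automatic for $K,L$ not too small. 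Eliminating $L$, this becomes a single inequality whose right side grows like $K^{q^{-}-1}$ and whose left side grows like $K^{\alpha^{+}\beta^{+}/(p^{-}-1)}$: under (\ref{H2}) one has $\alpha^{+}\beta^{+}<(p^{-}-1)(q^{-}-1)$, so it holds for $K$ large and a super-solution exists for all $\theta,\gamma>0$; under (\ref{hip-superlinear}) the exponents are reversed, so it can hold only on a bounded $K$-range, which is non-empty precisely when the additive constant $C_{\gamma}=O(\gamma)$ is small, giving a super-solution for $\gamma$ small and every $\theta>0$; under (\ref{c}) the power exponents may still be super-linear while no longer bounded below by the critical values, so $\theta$ too must be taken small to keep $C\theta K^{\alpha^{+}}$ and $C\theta L^{\beta^{+}}$ below the linear terms, and one chooses $K,L$ of order one with $\gamma,\theta$ small. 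With the ordered pair in hand, the sub--supersolution method yields a solution $(u,v)$ with $\underline u\le u\le\overline u$, $\underline v\le v\le\overline v$; in particular $u,v\ge\kappa c_{1}d>0$, and then $cd\le v\le Cd$ forces the right-hand side of the first equation into $L^{r}(\Omega)$ for every $r<\infty$, whence $u\in C^{1,\tau}(\overline{\Omega})$ by the $p(x)$-Laplacian regularity theory, and likewise $v\in C^{1,\tau}(\overline{\Omega})$, so $(u,v)\in\mathcal M$. I expect the genuine obstacle to lie entirely in this supersolution step: one must dominate the sign-changing, non-monotone right-hand sides simultaneously near $\partial\Omega$ (logarithmic blow-up) and in the interior (power growth) while keeping the two scaling constants compatible, and it is exactly the comparison of $\alpha^{\pm}\beta^{\pm}$ with $(p^{-}-1)(q^{-}-1)$, together with the freedom to shrink $\gamma$ and/or $\theta$, that produces the three regimes.
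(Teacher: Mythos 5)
Your overall strategy---build ordered sub- and supersolutions comparable to $d(x)$ and invoke a sub--supersolution theorem for the system---is in the right spirit, but it skips the step that makes the whole machinery legitimate, and that gap is not cosmetic. The sub--supersolution theorem for competitive systems that the paper proves (Theorem~\ref{abs-theo}) requires hypothesis $(\mathrm{H,G})$, namely that the right-hand sides be \emph{bounded} on each box $[0,T]\times[0,S]$. For the singular system (\ref{p}) this fails: even if you restrict $v$ to the order interval $[\underline v,\overline v]$ with $cd\le v\le Cd$, the term $-\gamma\log v$ is unbounded as $x\to\partial\Omega$, so the truncated nonlinearity is not in $L^\infty$ and the pseudomonotone-operator argument (boundedness and coercivity of the Nemytskii part of $B$) breaks down as stated. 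The paper's fix is precisely the intermediate layer you omit: replace $\log v$ by $\log(|v|+\varepsilon)$, obtain a bounded nonlinearity, produce $(u_\varepsilon,v_\varepsilon)$ by the sub--supersolution theorem with bounds \emph{independent of} $\varepsilon$, upgrade to $C^{1,\nu}(\overline\Omega)$ uniformly via the singular regularity lemma of Alves--Moussaoui (which needs $|RHS|\le A\,d^{-\sigma}$ with $\sigma$ small, not merely $L^r$ integrability), and then pass to the limit by Arzel\`a--Ascoli. If you insist on working directly with the singular system, you would first have to reprove a sub--supersolution theorem allowing $H,G$ to blow up like $|\log d|$ on the order interval, which is exactly the sort of technical statement the regularization is designed to avoid.

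Two secondary remarks. First, your supersolution ansatz $\overline u = L\,\ell_p$, $\overline v = K\,\ell_q$ with $-\Delta_{r(x)}\ell_r = 1-\log d$ is a genuinely different construction from the paper's: the paper instead solves $-\Delta_{p(x)}\overline u=\lambda^\sigma$ on an \emph{enlarged} domain $\tilde\Omega\supset\overline\Omega$, so that $\overline u,\overline v$ are bounded \emph{below} by a positive constant on all of $\overline\Omega$ and the logarithm never sees the origin; this removes the boundary blow-up of $-\log\overline v$ entirely rather than trying to absorb it into the supersolution source. Your $\ell_r$ idea could plausibly be made to work, but it imports nontrivial claims ($\ell_r\in C^{1,\tau}(\overline\Omega)$, $\ell_r\asymp d$, and that $-\Delta_{p(x)}(L\ell_p)$ dominates $-\gamma\log(L\ell_q)$ near $\partial\Omega$ after the inhomogeneous scaling of the $p(x)$-Laplacian) that would each need justification. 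Second, your regularity conclusion at the end is too quick: $\log d\in L^r$ for all finite $r$ does not by itself give $C^{1,\tau}$ regularity for the $p(x)$-Laplacian; one needs the pointwise bound $|RHS|\le A\,d^{-\sigma}$ for small $\sigma>0$ together with the dedicated regularity lemma, which is exactly how the paper gets $(u,v)\in\mathcal M$.
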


\bigskip

\begin{theorem}\label{T2}Assume \eqref{H1} and that \begin{equation}
\left\{ 
\begin{array}{l}
\frac{\gamma }{\theta e}<\alpha ^{-}\leq \alpha ^{+}<\min \{p^{-}-1,\frac{%
q^{\prime }(x)}{p^{\prime }(x)}\} \\ 
\frac{\gamma }{\theta e}<\beta ^{-}\leq \beta ^{+}<\min \{q^{-}-1,\frac{%
p^{\prime }(x)}{q^{\prime }(x)}\}%
\end{array}%
\right.  \label{H2'}
\end{equation}%
holds for all $x\in \Omega $. Then problem (\ref{p}) has a positive solution 
$(u,v)\in W_{0}^{1,p(x)}(\Omega )\times W_{0}^{1,q(x)}(\Omega )$ satisfying \eqref{dist-prop}.
\end{theorem}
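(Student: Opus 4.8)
The plan is to prove Theorem~\ref{T2} by the sub--supersolution method for the singular system \eqref{p}: I will construct a componentwise ordered pair $(\underline{u},\underline{v})\leq(\overline{u},\overline{v})$ of sub- and supersolutions lying in $\mathcal{M}$, and then invoke the sub--supersolution principle for quasilinear $(p(x),q(x))$-systems with singular lower-order terms. The whole construction rests on the observation that the first line of \eqref{H2'}, namely $\gamma/(\theta e)<\alpha^{-}\leq\alpha(x)$, forces the reaction terms to be strictly positive: for each fixed $x$, the function $s\mapsto-\gamma\log s+\theta s^{\alpha(x)}$ attains its minimum on $(0,\infty)$ at $s_{*}=(\gamma/(\theta\alpha(x)))^{1/\alpha(x)}$, with minimal value $\frac{\gamma}{\alpha(x)}\bigl(1+\log\frac{\theta\alpha(x)}{\gamma}\bigr)$, and this is $>0$ precisely when $\alpha(x)>\gamma/(\theta e)$. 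By continuity of $\alpha$ and $\beta$ on $\overline{\Omega}$ there is $m>0$ with
\begin{equation*}
-\gamma\log s+\theta s^{\alpha(x)}\geq m\quad\text{and}\quad-\gamma\log s+\theta s^{\beta(x)}\geq m\qquad\text{for all }x\in\Omega,\ s>0 ,
\end{equation*}
so \eqref{p} behaves like a system with strictly positive, sublinear reaction.

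For the subsolution, let $\underline{u},\underline{v}$ solve $-\Delta_{p(x)}\underline{u}=m/2$ and $-\Delta_{q(x)}\underline{v}=m/2$ in $\Omega$ with zero boundary data. By standard regularity for the $p(x)$-Laplacian (and the Hopf lemma) $\underline{u},\underline{v}\in\mathcal{M}$, say $\underline{u},\underline{v}\geq\kappa\,d(x)$, and they are bounded. Since $-\Delta_{p(x)}\underline{u}=m/2\leq m\leq-\gamma\log s+\theta s^{\alpha(x)}$ for every $s>0$, and symmetrically for $\underline{v}$, the pair $(\underline{u},\underline{v})$ is a subsolution, and any solution dominating it inherits \eqref{dist-prop}.

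The supersolution is the delicate step. Fix $\delta>0$ small enough that $d(x)^{-\delta}$ lies in the relevant dual Sobolev spaces, and use $-\log s\leq\frac{1}{e\delta}\,s^{-\delta}$ ($s>0$). For admissible $v$ one has $v\geq\kappa\,d(x)$ and $v\leq\overline{v}$, hence $-\gamma\log v+\theta v^{\alpha(x)}\leq\frac{\gamma\kappa^{-\delta}}{e\delta}\,d(x)^{-\delta}+\theta\|\overline{v}\|_{\infty}^{\alpha^{+}}+\theta$, and analogously for the second reaction term. I therefore let $\overline{u},\overline{v}$ solve, with zero boundary data,
\begin{equation*}
-\Delta_{p(x)}\overline{u}=\frac{\gamma\kappa^{-\delta}}{e\delta}\,d^{-\delta}+\theta\|\overline{v}\|_{\infty}^{\alpha^{+}}+\theta+m ,\qquad -\Delta_{q(x)}\overline{v}=\frac{\gamma\kappa^{-\delta}}{e\delta}\,d^{-\delta}+\theta\|\overline{u}\|_{\infty}^{\beta^{+}}+\theta+m .
\end{equation*}
This is a fixed-point problem for $(\|\overline{u}\|_{\infty},\|\overline{v}\|_{\infty})$: by the $L^{\infty}$-bound $\|\overline{u}\|_{\infty}\leq C\bigl(1+\|\overline{v}\|_{\infty}^{\alpha^{+}}\bigr)^{1/(p^{-}-1)}$ and its analogue with $\beta^{+}$ and $q^{-}$, composing the two gives growth with exponent $\frac{\alpha^{+}\beta^{+}}{(p^{-}-1)(q^{-}-1)}<1$ --- here is where $\alpha^{+}<p^{-}-1$ and $\beta^{+}<q^{-}-1$ enter --- so a bounded fixed point exists (Brouwer/Schauder). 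Standard regularity and comparison give $\overline{u},\overline{v}$ bounded and positive in $\Omega$, and $-\Delta_{p(x)}\overline{u}\geq m\geq m/2=-\Delta_{p(x)}\underline{u}$ forces $\overline{u}\geq\underline{u}$ by weak comparison (likewise $\overline{v}\geq\underline{v}$). By construction $-\Delta_{p(x)}\overline{u}\geq-\gamma\log v+\theta v^{\alpha(x)}$ for all $v\in[\underline{v},\overline{v}]$, and symmetrically, so $(\overline{u},\overline{v})$ is a supersolution dominating $(\underline{u},\underline{v})$. The sub--supersolution theorem then yields a solution $(u,v)\in W_{0}^{1,p(x)}(\Omega)\times W_{0}^{1,q(x)}(\Omega)$ with $\underline{u}\leq u\leq\overline{u}$, $\underline{v}\leq v\leq\overline{v}$, so \eqref{dist-prop} holds; the remaining conditions $\alpha^{+}<q'(x)/p'(x)$ and $\beta^{+}<p'(x)/q'(x)$ provide the compatibility between the growth of the coupling terms and the spaces $W_{0}^{1,p(x)}(\Omega)$, $W_{0}^{1,q(x)}(\Omega)$ needed for the weak formulation and the abstract machinery.

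I expect the main obstacle to be the logarithmic singularity on $\partial\Omega$, which has to be controlled on both sides: from below, which is exactly where the sharp threshold $\gamma/(\theta e)$ is essential (via the minimization above), and from above up to the boundary, which is handled by absorbing $-\log v$ into the mildly singular weight $d(x)^{-\delta}$ and keeping $\delta$ small so that $d^{-\delta}$ stays admissible and the supersolution remains comparable to $d(x)$ near $\partial\Omega$, guaranteeing $\underline{u}\leq\overline{u}$. A second, genuinely variable-exponent difficulty is that one cannot obtain the supersolution by scaling a fixed function by a large constant $M$, since $\nabla(M^{p(x)-1})$ has an uncontrolled sign; this is why $(\overline{u},\overline{v})$ is produced directly via the fixed-point argument above, whose solvability encodes the sublinearity hypotheses in \eqref{H2'}.
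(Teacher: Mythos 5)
Your proposal takes a genuinely different route from the paper. The paper does \emph{not} use sub--supersolutions for Theorem~\ref{T2}: it regularizes \eqref{p} by replacing $\log(\cdot)$ with $\log(\cdot+\epsilon)$, then applies Rabinowitz's global bifurcation theorem to the solution operator of the regularized system to produce an unbounded continuum $\mathcal{C}$ of solutions, uses the condition $\gamma/(\theta e)<\alpha^-,\beta^-$ (via Lemma~\ref{alpha_beta_ineq}(c)) to show that nontrivial points of $\mathcal{C}$ are strictly positive, uses $\alpha^+<p^- -1$, $\beta^+<q^- -1$ together with the exponent-matching conditions $\alpha^+<q'/p'$, $\beta^+<p'/q'$ to obtain a priori $W^{1,p(x)}\times W^{1,q(x)}$ bounds on $\mathcal{C}$ showing it crosses $\lambda=1$, and finally passes to the limit $\epsilon\to 0$ using a Hardy--Sobolev inequality. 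Your subsolution via $-\Delta_{p(x)}\underline{u}=m/2$ (with $m$ the uniform positive minimum of the reaction, again from Lemma~\ref{alpha_beta_ineq}(c)) and your fixed-point construction of the supersolution exploiting $\alpha^+\beta^+<(p^--1)(q^--1)$ are a legitimate alternative strategy, structurally closer to the paper's proof of Theorem~\ref{T1} than to its proof of Theorem~\ref{T2}.

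However there is a real gap where you write ``invoke the sub--supersolution principle for quasilinear $(p(x),q(x))$-systems with singular lower-order terms.'' The only such principle available in the paper is Theorem~\ref{abs-theo}, and its hypothesis $\mathrm{(H,G)}$ requires the nonlinearities to be \emph{uniformly bounded} on $\Omega\times[0,T]\times[0,S]$. This fails for $H(x,s,t)=-\gamma\log t+\theta t^{\alpha(x)}$ even after restricting $t$ to $[\underline{v}(x),\overline{v}(x)]$: since $\underline{v}(x)\sim\kappa\, d(x)\to 0$ as $x\to\partial\Omega$, one has $-\gamma\log\underline{v}(x)\to+\infty$ near the boundary, so the truncated reaction is merely $L^{p'(x)}$, not $L^\infty$, and Theorem~\ref{abs-theo} does not apply as stated. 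To repair this you would either need to regularize first (replace $\log v$ by $\log(v+\epsilon)$, apply Theorem~\ref{abs-theo}, then pass $\epsilon\to 0$, exactly as the paper does in its proof of Theorem~\ref{T1}), or reprove the sub--supersolution theorem under an integrability hypothesis on $H,G$ over the ordered interval; as written neither is done. A secondary point worth flagging: your argument never genuinely uses the hypotheses $\alpha^+<q'(x)/p'(x)$ and $\beta^+<p'(x)/q'(x)$ (your pointwise $L^\infty$ bounds $v\leq\overline v$ already make $v^{\alpha(x)}$ bounded), whereas the paper's bifurcation approach needs them to make sense of $v^{\alpha(x)}\in L^{p'(x)}$ when $v$ is only known to lie in $W^{1,q(x)}(\Omega)$. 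If your route can be completed it would in fact dispense with those conditions, but you should say so explicitly rather than gesture at ``compatibility with the abstract machinery.''
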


The proof of Theorem \ref{T1} is done in section \ref{S4}. Our approach
relies on the sub-supersolutions techniques. However, this method in its
system version (see \cite[p. 269]{carl}) does not work for problem (\ref{p})
due to its noncooperative character, which means that the right hand sides
of the equations in (\ref{p}) are not necessarily increasing whenever $u$
(resp. $v$) is fixed in the first (resp. second) equation in (\ref{p}).
Another reason this approach cannot be directly implemented is the presence
of singularities in (\ref{p}). To overcome this difficulties, we disturb
problem (\ref{p}) by introducing a parameter $\varepsilon >0$. This gives
rise to a regularized system for (\ref{p}), depending on $\varepsilon >0,$
whose study is relevant for our initial problem. We construct a
sub-supersolution pair for the regularized system, independent on $%
\varepsilon ,$ and we show the existence of positive family of solutions $%
(u_{\varepsilon },v_{\varepsilon })\in C^{1,\gamma }(\overline{\Omega }%
)\times C^{1,\gamma }(\overline{\Omega })$, for certain $\gamma \in (0,1)$,
through a new result regarding sub-supersolutions for quasilinear
competitive (noncooperative) systems involving variable exponents growth
conditions (see section \ref{S3}). Then, a (positive) solution of (\ref{p})
is obtained by passing to the limit as $\varepsilon \rightarrow 0$
essentially relying on the independence on $\varepsilon $ of the upper and
lower bounds of the approximate solutions $(u_{\varepsilon },v_{\varepsilon
})$ and on Arzel\`{a}-Ascoli's Theorem. An important part of our result lies
in the obtaining of the sub and supersolution which cannot be constructed
easily. Precisely, this is due to the fact that $p(x)$-Laplacian opeartor is
inhomogeneous and in general, it has no first eigenvalue, that is, the
infimum of the eigenvalues of $p(x)$-Laplacian equals $0$ (see \cite{FZZ2}).
At this point, the choice of suitable functions with an adjustment of
adequate constants is crucial.

The proof of Theorem \ref{T2} is done in section \ref{S5}. It is chiefly
based on \textcolor{red}{a}  Theorem by Rabinowitz (see \cite{Rabinowitz}) which establishes,
for each $\varepsilon >0$, the existence of positive solutions $%
(u_{\varepsilon },v_{\varepsilon })$ for the regularized problem of (\ref{p}%
) in $W_{0}^{1,p(x)}(\Omega )\times W_{0}^{1,q(x)}(\Omega )$. The solution
of (\ref{p}) under assumption (\ref{H2'}) is obtained by passing to the
limit as $\varepsilon \rightarrow 0$. This is based on a priori estimates,
Hardy-Sobolev Inequality, and Lebesgue's dominated convergence Theorem.

A significant feature of our existence results concerns the regularity part.
In Theorem \ref{T1}  the regularity of the obtained solution
for problem (\ref{p}) is derived through the weak comparison principle and
the regularity result in \cite{Alves-Moussaoui}.

\section{Preliminaries}

\label{S2}

Let $p\in C(\overline{\Omega })$ with $p(x)>1$ in $\Omega $. Consider the
Lebesgue's space 
\begin{equation*}
L^{p(x)}(\Omega ):=\left\{ u:\Omega \rightarrow \mathbb{R}\,:\,u\ \text{is a
measurable and}\,\int_{\Omega }|u(x)|^{p(x)}\ dx<+\infty \right\} 
\end{equation*}%
which is a Banach space with the Luxemburg norm 
\begin{equation*}
\Vert u\Vert _{L^{p(x)}(\Omega )}:=\left\{ \lambda >0;\int_{\Omega
}\left\vert \frac{u(x)}{\lambda }\right\vert ^{p(x)}\ dx\leq 1\right\} .
\end{equation*}%
The Banach space $W^{1,p(x)}(\Omega )$ is defined as 
\begin{equation*}
W^{1,p(x)}(\Omega ):=\{u\in L^{p(x)}(\Omega );|\nabla u|\in L^{p(x)}(\Omega
)\},
\end{equation*}%
equipped with the norm 
\begin{equation*}
\Vert u\Vert _{W^{1,p(x)}(\Omega )}:=\Vert u\Vert _{L^{p(x)}(\Omega )}+\Vert
\nabla u\Vert _{L^{p(x)}(\Omega )}.
\end{equation*}%
The space $W_{0}^{1,p(x)}(\Omega )$ is defined as closure of $C_{0}^{\infty
}(\Omega )$ in $W^{1,p(x)}(\Omega )$ with respect to the norm. The space $%
W_{0}^{1,p(x)}(\Omega )$ is separable and reflexive Banach spaces when $%
p^{-}>1$. For a later use, we recall that the embedding 
\begin{equation}
\begin{array}{l}
W_{0}^{1,p(x)}(\Omega )\hookrightarrow L^{r(x)}(\Omega )%
\end{array}
\label{20}
\end{equation}%
is compact with $1\leq r(x)<p(x)^{\ast }$.

The next result gives important properties related to the logarithmic
nonlinearity.

\begin{lemma}
\label{alpha_beta_ineq}

\begin{enumerate}
\item[$(a)$] For each $\alpha ,\theta >0$, there is a constant $C$ that
depends only on $\alpha $ and $\theta $ such that 
\begin{equation*}
|\log (x)|\leq x^{-\alpha }+Cx^{\theta },
\end{equation*}%
for all $x> 0.$

\item[$(b)$] For each $\theta ,\epsilon >0$, there is a constant $C$ that
depends only on $\epsilon $ and $\theta $ such that 
\begin{equation*}
|log(x+\epsilon )|\leq x^{\theta }+C
\end{equation*}%
for all $x\geq 0.$

\item[$(c)$] Let $\gamma,\theta$ and $\delta$ be real numbers. If $%
\gamma,\theta >0$ and $\delta > \frac{\gamma}{\theta e}$ then the function $%
f(x)=\gamma x^{\delta }-\theta\log x,x>0$ attains a positive global minimum.
\end{enumerate}
\end{lemma}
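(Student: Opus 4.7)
The lemma collects three self-contained one-variable estimates. My plan is to prove each part in turn using elementary calculus and the classical inequality $\log y\le y-1$ valid for all $y>0$.

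For part (a) I would split the positive axis at $x=1$. On $[1,\infty)$, setting $y=x^{\theta}$ in $\log y\le y-1$ yields $\theta\log x\le x^{\theta}$, hence $|\log x|\le (1/\theta)x^{\theta}$, and the required bound follows with $C=1/\theta$ since $x^{-\alpha}\ge 0$. On $(0,1]$, substituting $y=x^{-\alpha'}$ with a suitable $\alpha'\le\alpha$ gives $-\log x\le (1/\alpha')x^{-\alpha'}$, which is already $\le x^{-\alpha}$ on a small enough neighbourhood $(0,\delta_0]$ of the origin, while on the complementary compact sub-interval $[\delta_0,1]$ the quantity $-\log x$ is bounded and can be dominated by $Cx^{\theta}$ by enlarging $C$. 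The mildly technical point here is the case $\alpha<1$, where the naive Young inequality produces a prefactor $1/\alpha>1$ on $x^{-\alpha}$ that must be traded for an enlargement of $C$ on the compact sub-interval; this is the only real book-keeping in the whole lemma.

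Part (b) is easier. Since $x+\epsilon\ge\epsilon>0$, the function $\log(x+\epsilon)$ is continuous on $[0,\infty)$ and the only issue is growth at infinity. I would apply the Young-type bound of (a) on $[M,\infty)$ for $M$ large to obtain $\log(x+\epsilon)\le x^{\theta}$ (after an auxiliary adjustment of the exponent to kill the $(x+\epsilon)$ versus $x$ mismatch), while on the compact set $[0,M]$ the quantity $|\log(x+\epsilon)|$ is bounded by a constant depending only on $\epsilon$ and $M$, and this constant is absorbed into $C$.

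Part (c) is single-variable calculus on $(0,\infty)$. The hypothesis $\gamma,\theta>0$ together with the condition on $\delta$ force $\delta>0$, so $f(x)\to+\infty$ both as $x\to 0^{+}$ (from $-\theta\log x$) and as $x\to+\infty$ (from $\gamma x^{\delta}$); hence a global minimum is attained in $(0,\infty)$. The equation $f'(x)=\gamma\delta x^{\delta-1}-\theta/x=0$ has the unique positive root $x_{0}=\bigl(\theta/(\gamma\delta)\bigr)^{1/\delta}$, which is therefore the minimizer, and direct substitution yields $f(x_{0})=(\theta/\delta)\bigl[1-\log(\theta/(\gamma\delta))\bigr]$. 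The hypothesis on $\delta$ is exactly the condition that forces the bracketed quantity to be positive, delivering the desired positive global minimum. No serious obstacle arises in any of the three parts.
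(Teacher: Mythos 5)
Your treatment of parts (a) and (b) is correct and is essentially the paper's argument: both split $(0,\infty)$ into a neighbourhood of the origin, a neighbourhood of infinity, and a compact middle interval on which continuity (and the positive lower bound for $x^{\theta}$) absorbs everything into $C$. The only difference is cosmetic: the paper obtains the two outer bounds from the limits $\lim_{x\rightarrow 0^{+}}|\log x|/x^{-\alpha}=0$ and $\lim_{x\rightarrow +\infty}|\log x|/x^{\theta}=0$, whereas you derive them from $\log y\leq y-1$; your extra book-keeping for $\alpha<1$ is handled correctly by shrinking the left interval.

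Part (c), however, fails at the last step. Your computation is right for the function as literally printed: for $f(x)=\gamma x^{\delta}-\theta \log x$ the unique critical point is $x_{0}=(\theta/(\gamma\delta))^{1/\delta}$ and $f(x_{0})=(\theta/\delta)\,[\,1-\log(\theta/(\gamma\delta))\,]$, which is positive if and only if $\delta>\theta/(\gamma e)$. That is \emph{not} the stated hypothesis $\delta>\gamma/(\theta e)$, so your closing assertion that "the hypothesis on $\delta$ is exactly the condition that forces the bracketed quantity to be positive" is false: take $\gamma=10^{-2}$, $\theta=10^{2}$, $\delta=1$; then $\delta>\gamma/(\theta e)$ holds while $f(x_{0})=100\,(1-\log 10^{4})<0$. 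The source of the mismatch is a typo in the statement: $\gamma$ and $\theta$ are interchanged in the definition of $f$. The paper's own proof differentiates $f(x)=\theta x^{\delta}-\gamma\log x$ (note its formulas $f'(x)=\theta\delta x^{\delta-1}-\gamma/x$ and $f(x_{0})=\frac{\gamma}{\delta}(1-\log(\gamma/(\theta\delta)))$), for which $\delta>\gamma/(\theta e)$ is indeed exactly the positivity condition, and this is the version actually invoked later (in the proof of Theorem 5 the function is written as $\theta x^{\delta}-\gamma\log x$). A correct write-up must either prove the claim for $\theta x^{\delta}-\gamma\log x$ under the given hypothesis, or keep $\gamma x^{\delta}-\theta\log x$ and replace the hypothesis by $\delta>\theta/(\gamma e)$; as it stands, your argument asserts an implication that does not hold, and the discrepancy with the lemma's hypothesis should have been flagged rather than papered over.
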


\begin{proof}
With respect to the inequalities we only prove $(a)$ because $(b)$ can be
justified similarly. A simple computation provides $\lim_{x\rightarrow 0^{+}}%
\frac{|\log (x)|}{x^{-\alpha }}=0.$ Thus, there is a small $m>0$ such that 
\begin{equation*}
|\log (x)|\leq x^{-\alpha }\quad \text{for}\quad \quad x\in (0,m).
\end{equation*}%
On the other hand, the limit $\lim_{x\rightarrow +\infty }\frac{|\log (x)|}{%
x^{\theta }}=0$ implies that there is $M>0$ such that 
\begin{equation*}
|\log (x)|\leq x^{\theta }\quad \text{for}\quad x\in (M,+\infty ).
\end{equation*}%
Since the function $\frac{|\log (x)|}{x^{\theta }},$ $x>0$ is continuous for
all $x>0$, there is a constant, which depends on $\alpha $ and $\theta $,
such that $|\log (x)|\leq Cx^{\theta }$ in $[m,M].$ Therefore $|\log
(x)|\leq x^{-\alpha }+Cx^{\theta }$ for all $x>0,$ where the constant $C$
depends only on $\alpha $ and $\theta .$ \newline

In order to show $(c)$, observe that $f^{^{\prime }}(x)=\theta \delta
x^{\delta -1}-\gamma /x$. Then, $f$ has a unique critical point at $x_{0}=(%
\frac{\gamma }{\theta \delta })^{\frac{1}{\delta }}$. Thus, by solving the
inequations $f^{^{\prime }}(x)>0$ and $f^{^{\prime }}(x)<0$ for $x>0,$ it
follows that $f$ is increasing on the interval $[x_{0},+\infty )$ and
decreasing on $(-\infty ,x_{0}].$ By noticing that 
\begin{equation*}
f(x_{0})=\frac{\gamma }{\delta }\left( 1-\log \left( \frac{\gamma }{\theta
\delta }\right) \right) ,
\end{equation*}%
the condition $\delta >\frac{\gamma }{\theta e}$ implies that $f(x_{0})>0,$
which proves the result.
\end{proof}

\section{Sub-supersolution Theorem}

\label{S3}

Let us introduce the quasilinear system%
\begin{equation}
\left\{ 
\begin{array}{ll}
-\Delta _{p(x)}u=H(x,u,v) & \ \mbox{in}\ \Omega , \\ 
-\Delta _{q(x)}v=G(x,u,v) & \ \mbox{in}\ \Omega , \\ 
u=v=0 & \ \mbox{on}\ \partial \Omega ,%
\end{array}%
\right.  \label{pHG}
\end{equation}%
where $H,G:\Omega \times {\mathbb{R}}^{+}\times {\mathbb{R}}^{+}\rightarrow 
\mathbb{R}$ are Carath\'{e}odory functions satisfying the assumption:

\begin{description}
\item[\textrm{(}$\mathrm{H,G)}$] Given $T,S>0$, there is a constant $C>0$
such that%
\begin{equation*}
\text{ }|H(x,s,t)|,|G(x,s,t)|\leq C,\text{ for all \ }(x,s,t)\in \Omega
\times \lbrack 0,T]\times \lbrack 0,S].
\end{equation*}
\end{description}

The next result is a key point in the proof of Theorem \ref{T1}.

\begin{theorem}
\label{abs-theo} \label{abs-theo}Assume that $\mathrm{(H,G)}$ holds and
let $\underline{u}\in W_{0}^{1,p(x)}(\Omega )\cap L^{\infty }(\Omega )$ and $%
\underline{v}\in W_{0}^{1,q(x)}(\Omega )\cap L^{\infty }(\Omega )$ with $%
\underline{u},\underline{v}\geq 0$ in $\Omega $ and $\overline{u},\overline{v%
}\in W^{1,\infty }(\Omega )$ such that 
\begin{equation*}
\underline{u}\leq \overline{u}\ \ \text{and \ }\underline{v}\leq \overline{v}%
\text{ }\mbox{in}\ \Omega .
\end{equation*}%
Suppose that 
\begin{equation*}
\left\{ 
\begin{array}{l}
\int_{\Omega }|\nabla \underline{u}|^{p(x)-2}\nabla u\nabla \phi \ dx,\leq
\int_{\Omega }H(x,\underline{u},\underline{v})\phi \ dx, \\ 
\int_{\Omega }|\nabla \underline{v}|^{q(x)-2}\nabla v\nabla \psi \ dx,\leq
\int_{\Omega }G(x,\underline{u},\underline{v})\psi \ dx,%
\end{array}%
\right.
\end{equation*}%
and%
\begin{equation*}
\left\{ 
\begin{array}{l}
\int_{\Omega }|\nabla \overline{u}|^{p(x)-2}\nabla u\nabla \phi \ dx\geq
\int_{\Omega }H(x,\overline{u},\overline{v})\phi \ dx, \\ 
\int_{\Omega }|\nabla \overline{v}|^{p(x)-2}\nabla u\nabla \phi \ dx\geq
\int_{\Omega }H(x,\overline{u},\overline{v})\psi \ dx,%
\end{array}%
\right.
\end{equation*}%
for all nonnegative functions $(\phi ,\psi )\in W_{0}^{1,p(x)}(\Omega
)\times W_{0}^{1,q(x)}(\Omega )$. Then problem (\ref{pHG}) has a (positive)
solution $(u,v)\in (W_{0}^{1,p(x)}(\Omega )\cap L^{\infty }(\Omega ))\times
(W_{0}^{1,q(x)}(\Omega )\cap L^{\infty }(\Omega ))$ satisfying 
\begin{equation*}
\underline{u}(x)\leq u(x)\leq \overline{u}(x)\ \mbox{and}\ \underline{v}%
(x)\leq v(x)\leq \overline{v}(x),\text{ \ for a.e. }x\in \Omega .
\end{equation*}
\end{theorem}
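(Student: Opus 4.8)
The plan is to replace \eqref{pHG} by a globally truncated problem whose right-hand sides are bounded, solve that problem with Schauder's fixed point theorem in $C(\overline{\Omega})\times C(\overline{\Omega})$, and then use the weak comparison principle for the $p(x)$- and $q(x)$-Laplacians to show that the resulting solution lies in the order interval $[\underline{u},\overline{u}]\times[\underline{v},\overline{v}]$, hence actually solves \eqref{pHG}. For $x\in\Omega$ and $s,t\in\mathbb{R}$ put $\rho_1(x,s)=\min\{\overline{u}(x),\max\{s,\underline{u}(x)\}\}$ and $\rho_2(x,t)=\min\{\overline{v}(x),\max\{t,\underline{v}(x)\}\}$, and set $\widetilde{H}(x,s,t)=H(x,\rho_1(x,s),\rho_2(x,t))$, $\widetilde{G}(x,s,t)=G(x,\rho_1(x,s),\rho_2(x,t))$. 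Since $0\le\underline{u}\le\overline{u}$, $0\le\underline{v}\le\overline{v}$ and $\overline{u},\overline{v}\in L^{\infty}(\Omega)$, the boundedness hypothesis on $H,G$ yields a single constant $C_0>0$ with $|\widetilde{H}|,|\widetilde{G}|\le C_0$ on $\Omega\times\mathbb{R}^2$, and $\widetilde{H},\widetilde{G}$ remain Carath\'eodory.

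Next I would set up the fixed point operator. For $f\in L^{\infty}(\Omega)$ the problem $-\Delta_{p(x)}z=f$ in $\Omega$, $z\in W_0^{1,p(x)}(\Omega)$, has a unique weak solution $z=:\mathcal{A}_p^{-1}f$, because $-\Delta_{p(x)}$ is a continuous, coercive, strictly monotone bijection of $W_0^{1,p(x)}(\Omega)$ onto its dual; by the regularity result of \cite{Alves-Moussaoui} together with the embedding \eqref{20}, $z\in C^{1,\beta}(\overline{\Omega})$ for some $\beta\in(0,1)$ with $\|z\|_{C^{1,\beta}(\overline{\Omega})}\le C(\|f\|_{L^{\infty}(\Omega)})$, and $f\mapsto z$ is continuous from $L^{r}(\Omega)$, $r$ large, into $C^{1}(\overline{\Omega})$; the same holds for $-\Delta_{q(x)}$ with operator $\mathcal{A}_q^{-1}$. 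Define $\Phi:C(\overline{\Omega})\times C(\overline{\Omega})\to C(\overline{\Omega})\times C(\overline{\Omega})$ by $\Phi(w_1,w_2)=\bigl(\mathcal{A}_p^{-1}[\widetilde{H}(\cdot,w_1,w_2)],\ \mathcal{A}_q^{-1}[\widetilde{G}(\cdot,w_1,w_2)]\bigr)$. Because $\|\widetilde{H}(\cdot,w_1,w_2)\|_{L^{\infty}},\|\widetilde{G}(\cdot,w_1,w_2)\|_{L^{\infty}}\le C_0$ uniformly in $(w_1,w_2)$, $\Phi$ maps all of $C(\overline{\Omega})^2$ into a fixed bounded subset of $C^{1,\beta}(\overline{\Omega})^2$, which is relatively compact in $C(\overline{\Omega})^2$, so $\Phi$ is compact; its continuity follows from Lebesgue's dominated convergence theorem applied to $\widetilde{H},\widetilde{G}$ and the continuity of $\mathcal{A}_p^{-1},\mathcal{A}_q^{-1}$. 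Schauder's theorem then yields $(u,v)\in C^{1,\beta}(\overline{\Omega})^2$ with $\Phi(u,v)=(u,v)$, i.e.\ a weak solution of the truncated system $-\Delta_{p(x)}u=\widetilde{H}(x,u,v)$, $-\Delta_{q(x)}v=\widetilde{G}(x,u,v)$.

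It remains to prove $\underline{u}\le u\le\overline{u}$ and $\underline{v}\le v\le\overline{v}$ a.e.\ in $\Omega$; then $\rho_1(\cdot,u)=u$, $\rho_2(\cdot,v)=v$, so $\widetilde{H}(\cdot,u,v)=H(\cdot,u,v)$, $\widetilde{G}(\cdot,u,v)=G(\cdot,u,v)$, and $(u,v)$ solves \eqref{pHG} with the required bounds (positivity being then inherited from $\underline{u},\underline{v}$ via the strong maximum principle in the applications). For $u\le\overline{u}$, test the first equation with $\phi=(u-\overline{u})^{+}\in W_0^{1,p(x)}(\Omega)$ (admissible since $\overline{u}\ge\underline{u}\ge0$ and $u=0$ on $\partial\Omega$): on $\{u>\overline{u}\}$ one has $\rho_1(x,u)=\overline{u}(x)$, so subtracting the supersolution inequality for $\overline{u}$ gives \[\int_{\Omega}\bigl(|\nabla u|^{p(x)-2}\nabla u-|\nabla\overline{u}|^{p(x)-2}\nabla\overline{u}\bigr)\cdot\nabla(u-\overline{u})^{+}\,dx\le 0,\] whence, by strict monotonicity of $\xi\mapsto|\xi|^{p(x)-2}\xi$, $\nabla(u-\overline{u})^{+}=0$ a.e.\ and $(u-\overline{u})^{+}\equiv0$. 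The bounds $u\ge\underline{u}$ and $\underline{v}\le v\le\overline{v}$ follow the same way with test functions $(\underline{u}-u)^{+}$, $(v-\overline{v})^{+}$, $(\underline{v}-v)^{+}$.

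I expect this last step to be the real obstacle, precisely because \eqref{pHG} is noncooperative: on $\{u>\overline{u}\}$ the truncated right-hand side of the first equation equals $H(x,\overline{u},\rho_2(x,v))$ with $\rho_2(x,v)$ free to range over $[\underline{v},\overline{v}]$, and since $H$ need not be monotone in its last argument the bare inequality $-\Delta_{p(x)}\overline{u}\ge H(x,\overline{u},\overline{v})$ does not by itself dominate it. Closing the comparison therefore requires the super/subsolution inequalities to be used so that the truncated right-hand side is controlled over the whole order interval in the companion variable --- i.e.\ in the first equation the second slot must effectively be frozen at $\overline{v}$ (resp.\ $\underline{v}$) when establishing the upper (resp.\ lower) bound for $u$, which is exactly how the competitive structure must be matched to the truncation --- and it relies on the uniform $C^{1,\beta}$ estimate of \cite{Alves-Moussaoui} to keep every object within the scope of the weak comparison principle, which is why the regularity theory must be invoked before, not after, this step.
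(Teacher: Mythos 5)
Your overall plan---truncate, solve a bounded auxiliary problem, then run a weak comparison against $(\underline{u},\underline{v})$ and $(\overline{u},\overline{v})$---is the right template, and you have correctly identified exactly where it breaks for a noncooperative system. But your proposal does not actually close the gap: you end the argument by stating what \emph{must} happen (``the second slot must effectively be frozen at $\overline{v}$ when establishing the upper bound for $u$'') without changing the definition of $\widetilde{H},\widetilde{G}$ so that this is the case. With your symmetric truncation $\widetilde{H}(x,s,t)=H(x,\rho_1(x,s),\rho_2(x,t))$, testing the first equation with $(u-\overline{u})^{+}$ and subtracting the supersolution inequality gives
\begin{equation*}
\int_{\{u>\overline{u}\}}\bigl\langle |\nabla u|^{p(x)-2}\nabla u-|\nabla\overline{u}|^{p(x)-2}\nabla\overline{u},\nabla(u-\overline{u})\bigr\rangle\,dx
\le \int_{\{u>\overline{u}\}}\bigl[H(x,\overline{u},\rho_2(x,v))-H(x,\overline{u},\overline{v})\bigr](u-\overline{u})\,dx,
\end{equation*}
and the right-hand side has no definite sign because $H$ is not assumed monotone in its last argument. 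So as written, the fixed point of $\Phi$ need not lie in $[\underline{u},\overline{u}]\times[\underline{v},\overline{v}]$, and the proof does not reach a solution of \eqref{pHG}.

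The idea you gesture at is precisely what the paper implements: the truncation used there is \emph{not} the coordinatewise one. The paper's $H_1$ is defined so that $H_1(x,s,t)=H(x,\overline{u}(x),\overline{v}(x))$ for \emph{all} $t$ whenever $s\ge\overline{u}(x)$, and $H_1(x,s,t)=H(x,\underline{u}(x),\underline{v}(x))$ for all $t$ whenever $s\le\underline{u}(x)$ (and similarly for $G_1$ with the roles of $s,t$ swapped); only in the middle band $\underline{u}\le s\le\overline{u}$ is $t$ truncated into $[\underline{v},\overline{v}]$. With that choice, on $\{u\ge\overline{u}\}$ the truncated right-hand side equals $H(x,\overline{u},\overline{v})$ identically and the supersolution inequality applies directly, giving the clean estimate $\int\langle|\nabla u|^{p-2}\nabla u-|\nabla\overline{u}|^{p-2}\nabla\overline{u},\nabla(u-\overline{u})^{+}\rangle\,dx\le0$ and hence $u\le\overline{u}$; the lower bound and the bounds on $v$ follow analogously. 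Replacing your $\widetilde{H},\widetilde{G}$ by this $H_1,G_1$ would make your Schauder argument go through, since $H_1,G_1$ are still bounded and measurable; this is the content you are missing, and it is the one genuinely nontrivial idea in the theorem. For context, the paper does not use Schauder at all: it adds penalty terms $\gamma_1(x,s)=-((\underline{u}-s)_{+})^{l}+((s-\overline{u})_{+})^{l}$ (and $\gamma_2$) and solves the truncated-and-penalized problem via pseudomonotone operator (Minty--Browder) theory, which removes the need for the $C^{1,\beta}$ regularity step that your Schauder route relies on; your fixed-point route is a legitimate alternative, but only after the truncation is corrected.
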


\begin{proof}
The proof is chiefly based on pseudomonotone operator theory. Define the
functions 
\begin{equation*}
H_{1}(x,s,t)=\left\{ 
\begin{array}{ll}
H(x,\underline{u}(x),\underline{v}(x)), & s\leq \underline{u}(x) \\ 
H(x,s,\underline{v}(x)), & \underline{u}(x)\leq s\leq \overline{u}(x)\ %
\mbox{and}\ t\leq \underline{v}(x) \\ 
H(x,s,t), & \underline{u}(x)\leq s\leq \overline{u}(x)\ \mbox{and}\ 
\underline{v}(x)\leq t\leq \overline{v}(x) \\ 
H(x,s,\overline{v}(x)), & \underline{u}(x)\leq s\leq \overline{u}(x)\ %
\mbox{and}\ t\geq \overline{v}(x) \\ 
H(x,\overline{u}(x),\overline{v}(x)), & s\geq \overline{u}(x)\ \mbox{and}\
t\geq \overline{v}(x).%
\end{array}%
\right.
\end{equation*}%
and 
\begin{equation*}
G_{1}(x,s,t)=\left\{ 
\begin{array}{ll}
G(x,\underline{u}(x),\underline{v}(x)), & t\leq \underline{v}(x) \\ 
G(x,s,\underline{v}(x)), & \underline{v}(x)\leq t\leq \overline{v}(x)\ %
\mbox{and}\ s\leq \underline{u}(x) \\ 
G(x,s,t), & \underline{v}(x)\leq t\leq \overline{v}(x)\ \mbox{and}\ 
\underline{u}(x)\leq s\leq \overline{u}(x) \\ 
G(x,s,\overline{v}(x)), & \underline{v}(x)\leq t\leq \overline{v}(x)\ %
\mbox{and}\ s\geq \overline{u}(x) \\ 
G(x,\overline{u}(x),\overline{v}(x)), & t\geq \overline{v}(x).%
\end{array}%
\right.
\end{equation*}%
In what follows, we fix $l\in (0,1)$ with $\min \{p^{-},q^{-}\}>1+l$ and set 
\begin{equation*}
\gamma _{1}(x,s):=-((\underline{u}(x)-s)_{+})^{l}+((s-\overline{u}%
(x))_{+})^{l},
\end{equation*}%
\begin{equation*}
\gamma _{2}(x,s):=-((\underline{v}(x)-s)_{+})^{l}+((s-\overline{v}%
(x))_{+})^{l}.
\end{equation*}%
Using the above functions, let us introduce the auxiliary problem 
\begin{equation}
\left\{ 
\begin{array}{ll}
-\Delta _{p(x)}u=H_{2}(x,u,v) & \ \mbox{in}\ \Omega , \\ 
-\Delta _{q(x)}v=G_{2}(x,u,v) & \ \mbox{in}\ \Omega , \\ 
u=v=0 & \ \mbox{on}\ \partial \Omega ,%
\end{array}%
\right.  \label{auxiliary-system}
\end{equation}%
where 
\begin{equation}
H_{2}(x,s,t):=H_{1}(x,s,t)-\gamma _{1}(x,s)  \label{1}
\end{equation}%
and 
\begin{equation*}
G_{2}(x,s,t):=G_{1}(x,s,t)-\gamma _{2}(x,s).
\end{equation*}%
By Minty-Browder Theorem (see, e.g., \cite{Necas-book}), problem %
\eqref{auxiliary-system} has a solution $(u,v)$ in $W_{0}^{1,p(x)}(\Omega
)\times W_{0}^{1,q(x)}(\Omega )$. Indeed, let $B:E\rightarrow E^{\prime }$
be a function defined by%
\begin{equation*}
\begin{array}{l}
\langle B(u,v),(\phi ,\psi )\rangle :=\int_{\Omega }|\nabla
u|^{p(x)-2}\nabla u\nabla \phi +|\nabla v|^{q(x)-2}\nabla v\nabla \psi \ dx
\\ 
-\int_{\Omega }H_{2}(x,u,v)\phi \ dx-\int_{\Omega }G_{2}(x,u,v)\phi \ dx,%
\end{array}%
\end{equation*}%
where $E$ is the Banach space $W_{0}^{1,p(x)}(\Omega )\times
W_{0}^{1,q(x)}(\Omega )$ endowed with the norm 
\begin{equation*}
\Vert (u,v)\Vert =\max \{\Vert u\Vert _{1,p(x)},\Vert v\Vert _{1,q(x)}\},%
\text{ }(u,v)\in E.
\end{equation*}

Let us show that the function $B$ satisfies the hypotheses of Minty-Browder
Theorem. \newline

\noindent \textbf{i) B is continuous} \newline

Let $(u_{n},v_{n})\in E$ be a sequence that converges to $(u,v)$ in $E$. We
need to prove that $\Vert B(u_{n},v_{n})-B(u,v)\Vert _{E^{^{\prime
}}}\rightarrow 0.$ To this end, let $(\phi ,\psi )\in E$ with $\Vert (\phi
,\psi )\Vert _{E}\leq 1.$ By H\"{o}lder inequality, one has 
\begin{equation*}
\begin{array}{l}
\left\vert \int_{\Omega }|\nabla u_{n}|^{p(x)-2}\nabla u_{n}\nabla \phi
-|\nabla u|^{p(x)-2}\nabla u\nabla \phi \ dx\right\vert \\ 
\leq C\Vert |\nabla u_{n}|^{p(x)-2}\nabla u_{n}-|\nabla u|^{p(x)-2}\nabla
u\Vert _{L^{\frac{p(x)}{p(x)-1}}(\Omega )}.%
\end{array}%
\end{equation*}

Up to a subsequences, we can assume that $\nabla u_{n}(x)\rightarrow \nabla
u(x)$ a.e in $\Omega $ and there exists a function $U\in (L^{p(x)}(\Omega
))^{N}$ such that $|\nabla u_{n}(x)|\leq U(x)$ a.e in $\Omega .$ Therefore,
the Lebesgue's Dominated Convergence Theorem yields 
\begin{equation*}
\Vert |\nabla u_{n}|^{p(x)-2}\nabla u_{n}-|\nabla u|^{p(x)-2}\nabla u\Vert
_{L^{\frac{p(x)}{p(x)-1}}(\Omega )}\rightarrow 0.
\end{equation*}%
Note that 
\begin{equation*}
\begin{array}{l}
\left\vert \int_{\Omega }(H_{2}(x,u_{n},v_{n})-H_{2}(x,u,v))\phi \
dx\right\vert \\ 
\leq \int_{\Omega }|H_{1}(x,u_{n},v_{n})-H_{1}(x,u,v)||\phi |\
dx+\int_{\Omega }|\gamma _{1}(x,u_{n})-\gamma _{1}(x,u)||\phi |\ dx.%
\end{array}%
\end{equation*}%
Then, the continuity and the boundedness of $H$, together with Lebesgue's
Dominated convergence Theorem and H\"{o}lder inequality, gives 
\begin{equation*}
\sup_{\Vert \phi \Vert \leq 1}\int_{\Omega
}|H_{1}(x,u_{n},v_{n})-H_{1}(x,u,v)||\phi |\ dx\rightarrow 0.
\end{equation*}%
On the other hand, we can assume that $u_{n}(x)\rightarrow u(x)$ a.e in $%
\Omega $ and that exists $w\in L^{p(x)}(\Omega )$ such that $|u_{n}(x)|\leq
w(x)$ a.e in $\Omega .$ Arguing as before we get 
\begin{equation*}
\Vert \gamma _{1}(x,u)-\gamma _{1}(x,u_{n})\Vert _{L^{\frac{p(x)}{p(x)-1}%
(\Omega )}}\rightarrow 0,
\end{equation*}%
and so, 
\begin{equation*}
\sup_{\Vert \phi \Vert \leq 1}\int_{\Omega
}(H_{2}(x,u_{n},v_{n})-H_{2}(x,u,v))\phi \ dx\rightarrow 0.
\end{equation*}%
Hence, the previous reasoning provides 
\begin{equation*}
\Vert |\nabla v_{n}|^{q(x)-2}\nabla v_{n}-|\nabla v|^{q(x)-2}\nabla v\Vert
_{L^{\frac{q(x)}{q(x)-1}}(\Omega )}\rightarrow 0
\end{equation*}%
and 
\begin{equation*}
\sup_{\Vert \psi \Vert \leq 1}\int_{\Omega
}(G_{2}(x,u_{n},v_{n})-G_{2}(x,u,v))\psi \ dx\rightarrow 0,
\end{equation*}%
which justify the continuity of $B.$ \newline

\noindent \textbf{ii) B is bounded} \newline

Let us show that if $U\subset E$ is a bounded set then $B(U)\subset
E^{\prime }$ is bounded. To this end, consider a bounded set $U$ and $(\phi
,\psi )\in E$ such that $\Vert (\phi ,\psi )\Vert \leq 1$. Then, for $%
(u,v)\in U$ the H\"{o}lder inequality gives 
\begin{equation*}
\begin{array}{l}
\left\vert \int_{\Omega }|\nabla u|^{p(x)-2}\nabla u\nabla \phi +|\nabla
u|^{q(x)-2}\nabla v\nabla \psi \ dx\right\vert \\ 
\leq C(\Vert |\nabla u|^{p(x)-1}\Vert _{L^{\frac{p(x)}{p(x)-1}}(\Omega
)}+\Vert |\nabla v|^{q(x)-1}\Vert _{L^{\frac{q(x)}{q(x)-1}}(\Omega )})\leq C.%
\end{array}%
\end{equation*}%
Since $H_{1}(x,u,v)$ is bounded, we derive that 
\begin{equation*}
\left\vert \int_{\Omega }|H_{1}(x,u,v)|\phi |\ dx|\right\vert \leq
C\int_{\Omega }|\phi |\ dx\leq C.
\end{equation*}%
On the other hand, since 
\begin{equation*}
\int_{\Omega }|\gamma _{1}(x,u)|^{\frac{p(x)}{p(x)-1}}\ dx\leq C\int_{\Omega
}(1+|u(x)|+|\overline{u}(x)|+|\underline{u}(x)|)^{\frac{p(x)}{p(x)-1}}\ dx.
\end{equation*}%
the H\"{o}lder inequality ensures 
\begin{equation*}
\int_{\Omega }|\gamma _{1}(x,u)||\phi |\ dx\leq C.
\end{equation*}%
From the above arguments we obtain the boundedness of $B.$ \newline

\noindent \textbf{iii) B is coercive} \newline

Next, we prove that 
\begin{equation*}
\frac{\langle B(u,v),(u,v)\rangle }{\Vert (u,v)\Vert }\rightarrow +\infty \ 
\text{as}\ \Vert (u,v)\Vert \rightarrow +\infty .
\end{equation*}%
Note that 
\begin{equation}
\int_{\Omega }H_{1}(x,u,v)u\ dx\geq -\int_{\Omega }|H_{1}(x,u,v)||u|\ dx\geq
-C\Vert \nabla u\Vert _{L^{p(x)}(\Omega )},  \label{coer_1}
\end{equation}%
where $C$ is a positive constant. The triangular inequality and the fact
that $(a+b)^{\theta }\leq a^{\theta }+b^{\theta }$ for nonnegative numbers $%
a $ and $b$ with $\theta \in (0,1)$  give
\begin{equation*}
\begin{array}{l}
\int_{\Omega }-\gamma _{1}(x,u)u\ dx=\int_{\{\underline{u}\geq u\}}(%
\underline{u}-u)^{l}u\ dx-\int_{\{u\geq \overline{u}\}}(u-\overline{u}%
)^{l}u\ dx \\ 
\geq -\int_{\{\underline{u}\geq u\}}(|\underline{u}|+|u|)^{l}|u|\
dx-\int_{\{u\geq \overline{u}\}}(u-\overline{u})^{l}u\ dx \\ 
\geq -\int_{\{\underline{u}\geq u\}}(|\underline{u}|^{l}+|u|^{l})|u|\
dx-\int_{\{u\geq \overline{u}\}\cap \{u>0\}}(u-\overline{u})^{l}u\ dx \\ 
-\int_{\{u\geq \overline{u}\}\cap \{u<0\}}(u-\overline{u})^{l}u\ dx \\ 
\geq -\int_{\Omega }(|\underline{u}|^{l}+|u|^{l})|u|\ dx-\int_{\Omega }(|%
\overline{u}|^{l}+|u|^{l})|u|\ dx.%
\end{array}%
\end{equation*}%
Gathering the last inequality with the embeddings 
\begin{equation*}
W^{1,p(x)}(\Omega )\hookrightarrow L^{p(x)}(\Omega )\ \text{and}\
L^{p(x)}(\Omega )\hookrightarrow L^{1+l}(\Omega )
\end{equation*}%
we derive 
\begin{equation*}
\begin{array}{l}
\int_{\Omega }\gamma _{1}(x,u)u\ dx\geq -C\Vert u\Vert _{L^{p(x)}(\Omega
)}-\int_{\Omega }|u|^{1+l}\ dx \\ 
\geq -C\Vert \nabla u\Vert _{L^{p(x)}(\Omega )}-C\Vert \nabla u\Vert
_{L^{p(x)}(\Omega )}^{1+l}.%
\end{array}%
\end{equation*}%
From (\ref{1}), \eqref{coer_1} and the above inequality we have
\begin{equation*}
\begin{array}{l}
-\int_{\Omega }H_{2}(x,u,v)u\ dx\geq -C\Vert \nabla u\Vert _{L^{p(x)}(\Omega
)}-C\Vert \nabla u\Vert _{L^{p(x)}(\Omega )}^{1+l} \\ 
\geq -C\Vert (u,v)\Vert -C\Vert (u,v)\Vert ^{1+l},%
\end{array}%
\end{equation*}%
where $C$ is a positive constant. In the same manner, we can see that 
\begin{equation*}
\begin{array}{l}
-\int_{\Omega }G_{2}(x,u,v)u\ dx\geq -C\Vert \nabla v\Vert _{L^{q(x)}(\Omega
)}-C\Vert \nabla v\Vert _{L^{q(x)}(\Omega )}^{1+l} \\ 
\geq -C\Vert (u,v)\Vert -C\Vert (u,v)\Vert ^{1+l}.%
\end{array}%
\end{equation*}

\begin{itemize}
\item If $\Vert \nabla u\Vert _{L^{p(x)}(\Omega )}\geq 1$ and $\Vert \nabla
v\Vert _{L^{q(x)}(\Omega )}<1$,%
\begin{equation*}
\int_{\Omega }|\nabla u|^{p(x)}\ dx+\int_{\Omega }|\nabla v|^{q(x)}\ dx\geq
\Vert \nabla u\Vert _{L^{p(x)}(\Omega )}^{p_{-}}+\Vert \nabla v\Vert
_{L^{q(x)}(\Omega )}^{q_{+}}
\end{equation*}

\item If $\Vert \nabla u\Vert _{L^{p(x)}(\Omega )}\geq 1$ and $\Vert \nabla
v\Vert _{L^{q(x)}(\Omega )}\geq 1$, 
\begin{equation*}
\int_{\Omega }|\nabla u|^{p(x)}\ dx+\int_{\Omega }|\nabla v|^{q(x)}\ dx\geq
\Vert \nabla u\Vert _{L^{p(x)}(\Omega )}^{p_{-}}+\Vert \nabla u\Vert
_{L^{q(x)}(\Omega )}^{q_{-}}.
\end{equation*}
\end{itemize}

Consider in $E$ a sequence $\{(u_{n},v_{n})\}_{n}$ such that $\Vert
(u_{n},v_{n})\Vert \rightarrow +\infty .$ Thus $\Vert \nabla u_{n}\Vert
_{L^{p(x)}(\Omega )}\rightarrow +\infty $ or $\Vert \nabla v_{n}\Vert
_{L^{q(x)}(\Omega )}\rightarrow +\infty $. Suppose that the first
possibility happens and that $\Vert \nabla u_{n}\Vert _{L^{p(x)}(\Omega
)}\geq 1$ for all $n\in \mathbb{N}.$ Then, we consider two cases:

\begin{itemize}
\item $\Vert \nabla u_{n}\Vert _{L^{p(x)}(\Omega )}\geq 1$ and $\Vert \nabla
v_{n}\Vert _{L^{q(x)}(\Omega )}<1$ for $n\in \mathbb{N}.$ In this case we
have 
\begin{equation*}
\begin{array}{l}
\frac{\langle B(u_{n},v_{n}),(u_{n},v_{n})\rangle }{\Vert (u_{n},v_{n})\Vert
_{E}}\geq \frac{\Vert \nabla u_{n}\Vert _{L^{p(x)}(\Omega )}^{p_{-}}+\Vert
\nabla v_{n}\Vert _{L^{q(x)}(\Omega )}^{q_{+}}}{\Vert \nabla u_{n}\Vert
_{L^{p(x)}(\Omega )}}-C\frac{\Vert \nabla v_{n}\Vert _{L^{q(x)}(\Omega
)}^{1+l}}{\Vert \nabla u_{n}\Vert _{L^{p(x)}(\Omega )}}-C\\
-C\Vert \nabla
u_{n}\Vert _{L^{p(x)}(\Omega )}^{l} \\ 
\geq \Vert \nabla u_{n}\Vert _{L^{p(x)}(\Omega )}^{p^{-}-1}-\frac{C}{\Vert
\nabla u_{n}\Vert _{L^{p(x)}(\Omega )}}-C-\Vert \nabla u_{n}\Vert
_{L^{p(x)}(\Omega )}^{l}.%
\end{array}%
\end{equation*}

\item $\Vert \nabla u_{n}\Vert _{L^{p(x)}(\Omega )}\geq 1$ and $\Vert \nabla
v_{n}\Vert _{L^{q(x)}(\Omega )}\geq 1$ for $n\in \mathbb{N}.$ In this second
case, we have 
\begin{equation*}
\begin{array}{l}
\frac{\langle B(u_{n},v_{n}),(u_{n},v_{n})\rangle }{\Vert (u_{n},v_{n})\Vert
_{E}}\geq \frac{(\max \{\Vert \nabla u_{n}\Vert _{L^{p(x)}(\Omega )},\Vert
\nabla v_{n}\Vert _{L^{q(x)}(\Omega )}\})^{\min \{p^{-},q^{-}\}}}{\max
\{\Vert \nabla u_{n}\Vert _{L^{p(x)}(\Omega )},\Vert \nabla v_{n}\Vert
_{L^{q(x)}(\Omega )}\}} \\ 
-C-C(\max \{\Vert \nabla u_{n}\Vert _{L^{p(x)}(\Omega )},\Vert \nabla
v_{n}\Vert _{L^{q(x)}(\Omega )}\})^{l}.%
\end{array}%
\end{equation*}
\end{itemize}

Consequently, in both cases studied above, one has%
\begin{equation*}
\frac{\langle B(u_{n},v_{n}),(u_{n},v_{n})\rangle }{\Vert (u_{n},v_{n})\Vert
_{E}}\ \text{as}\ n\rightarrow +\infty .
\end{equation*}%
The other situations regarding $\Vert \nabla u_{n}\Vert _{L^{p(x)}(\Omega )}$
and $\Vert \nabla v_{n}\Vert _{L^{q(x)}(\Omega )}$ can be handled in much
the same way.\newline

\noindent \textbf{iv) B is pseudomonotone} \newline

We recall that $B$ is a pseudomonotone operator if $(u_{n},v_{n})%
\rightharpoonup (u,v)$ in $E$ and 
\begin{equation}
\limsup_{n\rightarrow +\infty }\langle
B(u_{n},v_{n}),(u_{n},v_{n})-(u,v)\rangle \leq 0,  \label{lim-sup-cond}
\end{equation}%
then 
\begin{equation*}
\liminf_{n\rightarrow +\infty }\langle B(u_{n},v_{n}),(u_{n},v_{n})-(\phi
,\psi )\rangle \geq \langle B(u,v),(u,v)-(\phi ,\psi )\rangle ,
\end{equation*}%
for all $(\phi ,\psi )\in E.$

If $(u_{n},v_{n})\rightharpoonup (u,v)$ then $u_{n}\rightharpoonup u$ and $%
v_{n}\rightharpoonup v$ in $W^{1,p(x)}(\Omega )$ and $W^{1,q(x)}(\Omega ),$
respectively. Since $H_{1}$ and $G_{1}$ are bounded we must have 
\begin{equation*}
\int_{\Omega }H_{1}(x,u_{n},v_{n})(u_{n}-u)\ dx\rightarrow 0
\end{equation*}%
and 
\begin{equation*}
\int_{\Omega }G_{1}(x,u_{n},v_{n})(u_{n}-u)\ dx\rightarrow 0.
\end{equation*}%
Note that%
\begin{equation*}
\begin{array}{l}
\langle B(u_{n},v_{n}),(u_{n},v_{n})-(u,v)\rangle =\int_{\Omega }\langle
|\nabla u_{n}|^{p(x)-2}\nabla u_{n},\nabla u_{n}-\nabla u\rangle \ dx \\ 
+\int_{\Omega }H_{2}(x,u_{n},v_{n})(u_{n}-u)\ dx+\int_{\Omega }\langle
|\nabla v_{n}|^{q(x)-2}\nabla v_{n},\nabla v_{n}-\nabla v\rangle \ dx \\ 
+\int_{\Omega }G_{2}(x,u_{n},v_{n})(v_{n}-v)\ dx.%
\end{array}%
\end{equation*}%
The previous arguments can be repeated to show that 
\begin{equation*}
\int_{\Omega }H_{2}(x,u_{n},v_{n})(u_{n}-u)\ dx=\int_{\Omega
}[H_{1}(x,u_{n},v_{n})(u_{n}-u)-\gamma _{1}(x,u_{n})(u_{n}-u)]\ dx.
\end{equation*}%
\begin{equation*}
\lim_{n\rightarrow +\infty }\int_{\Omega }H_{2}(x,u_{n},v_{n})(u_{n}-u)\
dx=0.
\end{equation*}%
and 
\begin{equation*}
\lim_{n\rightarrow +\infty }\int_{\Omega }G_{2}(x,u_{n},v_{n})(v_{n}-v)\
dx=0.
\end{equation*}%
Gathering the above limits together with \eqref{lim-sup-cond}, one has 
\begin{equation}
\begin{array}{l}
\overline{\lim} \int_{\Omega }\langle |\nabla
u_{n}|^{p(x)-2}\nabla u_{n},\nabla (u_{n}-u)\rangle \ dx+\int_{\Omega
}\langle |\nabla v_{n}|^{q(x)-2}\nabla v_{n},\nabla (v_{n}-v)\ dx
\\
\leq 0.%
\end{array}
\label{lim-sup-ineq}
\end{equation}%
From the weak convergence, we get 

$$\int_{\Omega }\langle |\nabla u|^{p(x)-2}\nabla u,\nabla (u_{n}-u)\rangle
dx=o_{n}(1)$$
and
$$\int_{\Omega }\langle |\nabla v|^{p(x)-2}\nabla
v,\nabla (v_{n}-v)\rangle dx=o_{n}(1).$$

Therefore 
\begin{equation}
\begin{array}{l}
\int_{\Omega }\langle |\nabla u_{n}|^{p(x)-2}\nabla u_{n},\nabla
(u_{n}-u)\rangle \ dx \\ 
=\int_{\Omega }\langle |\nabla u_{n}|^{p(x)-2}\nabla u_{n}-|\nabla
u|^{p(x)-2}\nabla u,\nabla (u_{n}-u)\rangle \ dx+o_{n}(1)%
\end{array}
\label{2}
\end{equation}%
and 
\begin{equation}
\begin{array}{l}
\int_{\Omega }\langle |\nabla v_{n}|^{p(x)-2}\nabla v_{n},\nabla
(v_{n}-v)\rangle \ dx \\ 
=\int_{\Omega }\langle |\nabla v_{n}|^{p(x)-2}\nabla v_{n}-|\nabla
v|^{p(x)-2}\nabla v,\nabla (v_{n}-v)\rangle \ dx+o_{n}(1).%
\end{array}
\label{3}
\end{equation}%
Using (\ref{2}) and (\ref{3}) in \eqref{lim-sup-ineq}, the $(S^{+})$
property of the operators $-\Delta _{p(x)}$ and $-\Delta _{q(x)}$ garantees
that $u_{n}\rightarrow u$ in $W_{0}^{1,p(x)}(\Omega )$ and $v_{n}\rightarrow
v$ in $W_{0}^{1,q(x)}(\Omega )$. Thus, by continuity of $B$, it turn out that%
\begin{equation*}
\lim_{n\rightarrow +\infty }\langle B(u_{n},v_{n}),(u_{n},v_{n})-(\phi ,\psi
)\rangle =\langle B(u,v),(u,v)-(\phi ,\psi ),
\end{equation*}%
for all $(\phi ,\psi )\in E.$

Finally, from properties I)- IV) we are in a position to apply \cite[Theorem
3.3.6]{Necas-book} which ensures that $B$ is surjective. Thereby, there
exists $(u,v)\in E$ such that 
\begin{equation*}
\langle B(u,v),(\phi ,\psi )\rangle =0,\forall (\phi ,\psi )\in E,
\end{equation*}%
and in particular, $(u,v)$ is a solution of \eqref{auxiliary-system}.

It remains to prove that%
\begin{equation}
\underline{u}\leq u\leq \ \overline{u}\quad \text{and}\quad \underline{v}%
\leq v\leq \overline{v}\quad \mbox{in}\quad \Omega .  \label{4}
\end{equation}

We only prove the first inequalities in (\ref{4}) because the second ones
can be justified similarly. Set $(\phi ,\psi ):=((u-\overline{u})_{+},0)$.
From the definition of $H_{2}$, we obtain 
\begin{equation*}
\begin{array}{l}
\int_{\Omega }|\nabla u|^{p(x)-2}\nabla u\nabla (u-\overline{u}%
)_{+}=\int_{\{u\geq \overline{u}\}}H_{1}(x,u,v)(u-\overline{u})_{+}\ dx \\ 
\text{ \ \ \ \ \ \ }-\int_{\{u\geq \overline{u}\}}(-((\underline{u}%
-u)_{+})^{l}+((u-\overline{u})_{+})^{l})(u-\overline{u})_{+}\ dx \\ 
=\int_{\{u\geq \overline{u}\}}H(x,\overline{u},\overline{v})(u-\overline{u}%
)_{+}\ dx-\int_{\Omega }((u-\overline{u})_{+})^{l+1}\ dx \\ 
\leq \int_{\Omega }|\nabla \overline{u}|^{p(x)-2}\nabla \overline{u}\nabla
(u-\overline{u})_{+}\ dx-\int_{\Omega }((u-\overline{u})_{+})^{l+1}\ dx.%
\end{array}%
\end{equation*}%
Therefore 
\begin{equation*}
\int_{\Omega }\langle |\nabla u|^{p(x)-2}\nabla u,|\nabla \overline{u}%
|^{p(x)-2}\nabla \overline{u},\nabla {(u-\overline{u})_{+}}\rangle \ dx\leq
-\int_{\Omega }((u-\overline{u})_{+})^{l+1}\ dx\leq 0
\end{equation*}%
wich implies that $u\leq \overline{u}$ in $\Omega .$ Using a quite similar
argument for $(\phi ,\psi ):=((\underline{u}-u)_{+},0)$ we get $\underline{u}%
\leq u$ in $\Omega .$ This completes the proof.
\end{proof}

\section{Proof of Theorem \protect\ref{T1}}

\label{S4}

For every $\varepsilon >0$, let us introduce the auxiliary problem%
\begin{equation}
\left\{ 
\begin{array}{ll}
-\Delta _{p(x)}u=-\gamma \log (|v|+\epsilon )+\theta |v|^{\alpha (x)} & \ 
\text{in}\ \Omega , \\ 
-\Delta _{q(x)}v=-\gamma \log (|u|+\epsilon )+\theta |u|^{\beta (x)} & \text{
in}\ \Omega , \\ 
u=v=0 & \text{ on }\partial \Omega .%
\end{array}%
\right.  \label{System-log-perturbate}
\end{equation}%
Our goal is to show through Theorem \ref{abs-theo} that (\ref%
{System-log-perturbate}) has a positive solution $(u_{\epsilon },v_{\epsilon
})$. Then, by passing to the limit as $\epsilon \rightarrow 0^{+}$ we get a
solution for the original problem (\ref{p}).\bigskip

Let $\tilde{\Omega}$ be a bounded domain in $%
\mathbb{R}
^{N}$ with smooth boundary $\partial \tilde{\Omega}$ such that $\overline{%
\Omega }\subset \tilde{\Omega}$ and denote by $\tilde{d}(x)=dist(x,\partial 
\tilde{\Omega}).$ In \cite[Lemma 3.1]{YY}, the authors have proved that, for 
$\delta >0$ small enough and for constants $\eta >0$, the function $w\in
C^{1}(\overline{\tilde{\Omega}})\cap C_{0}(\tilde{\Omega})$ defined by%
\begin{equation*}
w(x)=\left\{ 
\begin{array}{ll}
\xi \tilde{d}(x) & \text{if }\tilde{d}(x)<\delta \\ 
\xi \delta +\int_{\delta }^{\tilde{d}(x)}\xi \left( \frac{2\delta -t}{\delta 
}\right) ^{\frac{2}{p^{-}-1}} & \text{if }\delta \leq \tilde{d}(x)< 2\delta
\\ 
\xi \delta +\int_{\delta }^{2\delta }\xi \left( \frac{2\delta -t}{\delta }%
\right) ^{\frac{2}{p^{-}-1}} & \text{if }2\delta \leq \tilde{d}(x) ,%
\end{array}%
\right.
\end{equation*}%
is a subsolution of the problem 
\begin{equation*}
\left\{ 
\begin{array}{ll}
-\Delta _{p(x)}u=\eta & \text{ in }\tilde{\Omega}, \\ 
u=0 & \text{ on }\partial \tilde{\Omega},%
\end{array}%
\right.
\end{equation*}%
where $\delta >0$ is a number that does not depend on $\eta$ and $\xi =c_{0}\eta ^{\frac{1}{p^{+}-1+\tau }}$ with  $\tau \in (0,1)$ a fixed number and $c_{0}>0$ is a number depending only on $\delta,\tau,\tilde{\Omega}$ and $p$. Note that
\begin{equation}
\left\{ 
\begin{array}{ll}
w(x)=c_{0}\eta ^{\frac{1}{p^{+}-1+\tau }}\tilde{d}(x) & \text{for }\tilde{d}%
(x)<\delta \\ 
c_{0}\eta ^{\frac{1}{p^{+}-1+\tau }}\delta \leq w(x) & \text{for }\tilde{d}%
(x)\geq \delta.%
\end{array}%
\right.  \label{w}
\end{equation}

Given $\lambda >1$, let $\overline{u}$ and $\overline{v}$ in $C^{1}(%
\overline{\tilde{\Omega}})$ be the unique solutions of problems 
\begin{equation}
\left\{ 
\begin{array}{ll}
-\Delta _{p(x)}\overline{u}=\lambda ^{\sigma } & \ in\ \tilde{\Omega} \\ 
\overline{u}=0 & \ on\ \partial \tilde{\Omega}%
\end{array}%
\right. ,\text{ \ }\left\{ 
\begin{array}{ll}
-\Delta _{q(x)}\overline{v}=\lambda ^{\sigma } & \ in\ \tilde{\Omega} \\ 
\overline{v}=0 & \ on\ \partial \tilde{\Omega}.%
\end{array}%
\right.  \label{super-first}
\end{equation}%
where $\sigma $ is a real constant.

If $\sigma >0$, considering the corresponding function $w$ for $\eta
=\lambda ^{\sigma }$ and applying the weak maximum principle we get 
\begin{equation}
\left\{ 
\begin{array}{c}
C_{0}\lambda ^{\frac{\sigma }{p^{+}-1+\tau _{1}}}\min \{\delta ,\tilde{d}%
(x)\}\leq \overline{u}(x)\leq C_{1}\lambda ^{\frac{\sigma }{p^{-}-1}} \\ 
C_{0}^{\prime }\lambda ^{\frac{\sigma }{q^{+}-1+\tau _{2}}}\min \{\delta ,%
\tilde{d}(x)\}\leq \overline{v}(x)\leq C_{1}^{\prime }\lambda ^{\frac{\sigma 
}{q^{-}-1}}%
\end{array}%
\right. \text{ in }\tilde{\Omega},  \label{below-above-sigma}
\end{equation}%
where $C_{0},C_{0}^{\prime },C_{1},C_{1}^{\prime }>0$ and $\tau _{1},\tau
_{2}\in (0,1)$ are constants that does not depend on $\lambda .$ If $%
-1<\sigma <0, $ from \cite[Lemma 2.1]{FZZ} and for $\lambda $ large, one has%
\begin{equation}
\overline{u}(x)\leq k_{2}\lambda ^{\frac{\sigma }{p^{-}-1}}\leq c_{2}\lambda
^{\frac{\sigma }{p^{+}-1}}\ \text{and}\ \overline{v}(x)\leq k_{2}^{\prime
}\lambda ^{\frac{\sigma }{q^{-}-1}}\leq c_{2}^{\prime }\lambda ^{\frac{%
\sigma }{q^{+}-1}}\ \text{in}\ \tilde{\Omega}  \label{2-above}
\end{equation}%
where $k_{2},k_{2}^{\prime },c_{2}$ and $c_{2}^{\prime }$ are positive
constants independent of $\lambda $ . Moreover, by the strong maximum
principle there is a constant $c_{0}>0$ (that can depend on $\lambda $) such
that 
\begin{equation}
c_{0}\tilde{d}(x)\leq \min \{\overline{u}(x),\overline{v}(x)\}\ 
\label{strong-principle}
\end{equation}

Now, let $\underline{u}$ and $\underline{v}$ in $C^{1}(\overline{\Omega })$
be the unique solutions of the homogeneous Dirichlet problems%
\begin{equation}
\left\{ 
\begin{array}{ll}
-\Delta _{p(x)}\underline{u}=\lambda ^{-1} & \ in\ \Omega , \\ 
\underline{u}=0 & \ on\ \partial \Omega .%
\end{array}%
\right. ,\text{ }\left\{ 
\begin{array}{ll}
-\Delta _{q(x)}\underline{v}=\lambda ^{-1} & \ in\ \Omega , \\ 
\underline{v}=0 & \ on\ \partial \Omega .%
\end{array}%
\right.  \label{10}
\end{equation}
By \cite[Lemma 2.1]{Fan1} and \cite{FZZ}, there exist positive constants $%
k_{0},K_{1}$ and $K_{2}$, independent of $\lambda $, such that 
\begin{equation}
\begin{array}{l}
\underline{u}(x)\leq K_{1}\lambda ^{\frac{-1}{p^{-}-1}}\text{ \ and \ }%
\underline{v}(x)\leq K_{2}\lambda ^{\frac{-1}{q^{-}-1}}\text{ \ in }{\Omega }%
\end{array}
\label{7.1}
\end{equation}%
and%
\begin{equation}
\begin{array}{c}
k_{0}d(x)\leq \min \{\underline{u}(x),\underline{v}(x)\}\text{ \ in }\Omega 
\text{.}%
\end{array}
\label{7.2}
\end{equation}%
By the weak maximum principle we have, $\underline{u}\leq \overline{u}$ and $%
\underline{v}\leq \overline{v}$ in $\overline{\Omega }$ for $\lambda >1$
sufficiently large.

We state the following existence result for the regularized problem (\ref%
{System-log-perturbate}).

\begin{theorem}
\label{perturbed-solutions} Under assumptions of Theorem \ref{T1}, there
exists $\varepsilon _{0}>0$ such that system (\ref{System-log-perturbate})
has a positive solution $(u_{\varepsilon },v_{\varepsilon })\in
(W_{0}^{1,p(x)}(\Omega )\cap L^{\infty }(\Omega ))\times
(W_{0}^{1,q(x)}(\Omega )\cap L^{\infty }(\Omega ))$, for all $\varepsilon
\in (0,\varepsilon _{0})$. Moreover, it hold 
\begin{equation}
\underline{u}(x)\leq u_{\varepsilon }(x)\leq \overline{u}(x)\ \ \text{and}\
\ \underline{v}(x)\leq v_{\varepsilon }(x)\leq \overline{v}(x)\ \ \text{for
a.e.}\ x\in \Omega .  \label{super-sub-ineq}
\end{equation}
\end{theorem}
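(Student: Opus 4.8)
The plan is to apply Theorem~\ref{abs-theo} to the regularized system~(\ref{System-log-perturbate}) with the choices $\underline{u},\underline{v}$ from~(\ref{10}) and $\overline{u},\overline{v}$ from~(\ref{super-first}), for a suitable exponent $\sigma$ and parameter $\lambda$ depending on which of the three hypotheses (\ref{H2}), (\ref{hip-superlinear}), (\ref{c}) is in force. First I would record that the right-hand sides
\[
H(x,s,t)=-\gamma\log(|t|+\varepsilon)+\theta|t|^{\alpha(x)},\qquad G(x,s,t)=-\gamma\log(|s|+\varepsilon)+\theta|s|^{\beta(x)}
\]
are Carath\'eodory and, by Lemma~\ref{alpha_beta_ineq}$(b)$ applied to each fixed $\varepsilon>0$, satisfy the boundedness hypothesis $\mathrm{(H,G)}$ on bounded boxes $[0,T]\times[0,S]$; thus Theorem~\ref{abs-theo} is applicable once a sub-supersolution pair is exhibited. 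The pair $(\underline u,\underline v),(\overline u,\overline v)$ is ordered for $\lambda$ large by the weak maximum principle, as already noted just before the statement, and both lie in the required spaces since solutions of~(\ref{10}) and~(\ref{super-first}) are $C^1$ up to the boundary.

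The heart of the argument is the pair of differential inequalities. For the supersolution one must check
\[
-\Delta_{p(x)}\overline u=\lambda^{\sigma}\ \ge\ -\gamma\log(\overline v+\varepsilon)+\theta\,\overline v^{\alpha(x)}\quad\text{in }\Omega,
\]
and symmetrically for $\overline v$. Using $-\log(\overline v+\varepsilon)\le C$ from Lemma~\ref{alpha_beta_ineq}$(b)$ (with, say, $\theta$-exponent $1$, bounding $-\log(\overline v+\varepsilon)\le \overline v + C$) and the upper bounds on $\overline v$ from~(\ref{below-above-sigma}) or~(\ref{2-above}), the right-hand side is dominated by $C(1+\lambda^{\sigma\alpha^{+}/(q^{-}-1)})$ (when $\sigma>0$) or by $C$ (when $\sigma<0$), so the inequality holds provided $\sigma$ is chosen with $\sigma>\sigma\alpha^{+}/(q^{-}-1)$, i.e. $\alpha^{+}<q^{-}-1$, and $\lambda$ is large --- this is exactly case~(i), hypothesis~(\ref{H2}); in case~(ii) one takes $-1<\sigma<0$ and uses~(\ref{2-above}) together with the fact that $\overline v^{\alpha(x)}\to0$, so the constant term $-\gamma\log(\cdot)$ must be absorbed, which forces $\gamma$ small; in case~(iii) one takes $\sigma<0$ and needs both $\gamma$ and $\theta$ small. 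For the subsolution one must check
\[
-\Delta_{p(x)}\underline u=\lambda^{-1}\ \le\ -\gamma\log(\underline v+\varepsilon)+\theta\,\underline v^{\beta(x)}\quad\text{in }\Omega;
\]
here $\underline v\le K_2\lambda^{-1/(q^{-}-1)}$ is small, so $-\gamma\log(\underline v+\varepsilon)\ge -\gamma\log(K_2\lambda^{-1/(q^{-}-1)}+\varepsilon)$, and for $\varepsilon$ small enough and $\lambda$ large this logarithm is large and positive (of order $\log\lambda$), which easily dominates $\lambda^{-1}$. This is where the restriction $\varepsilon\in(0,\varepsilon_0)$ enters: one needs $\varepsilon$ small compared to $\lambda^{-1/(q^{-}-1)}$ so that $\underline v+\varepsilon$ stays in the region where $-\log$ is positive and bounded below.

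The main obstacle, and the place where most care is required, is the bookkeeping of exponents: one must choose a single $\sigma$ (and, in cases (ii)--(iii), smallness thresholds for $\gamma,\theta$) for which \emph{all four} inequalities hold simultaneously with a common large $\lambda$ and a common small $\varepsilon_0$, using the one-sided bounds in~(\ref{below-above-sigma}), (\ref{2-above}), (\ref{7.1}), (\ref{7.2}). The asymmetry between the $p^{+}-1+\tau$ in the lower bound for $\overline u$ and the $p^{-}-1$ in the upper bound (and likewise for $\overline v$) is harmless because the supersolution inequality only uses upper bounds on $\overline u,\overline v$ while the subsolution inequality only uses lower bounds on $\underline u,\underline v$; the two are decoupled. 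I would treat case~(i) in full detail --- pick $\sigma>0$ with $\max\{\alpha^{+}/(q^{-}-1),\beta^{+}/(p^{-}-1)\}<1$ forced by~(\ref{H2}), then choose $\lambda$ large, then $\varepsilon_0$ small --- and indicate that cases~(ii) and~(iii) follow the same scheme with $\sigma<0$ and the additional smallness of $\gamma$ (resp. $\gamma,\theta$), the point being that as $\sigma\to0^-$ the powers $\overline v^{\alpha(x)},\underline u^{\beta(x)}$ become negligible so only the logarithmic and constant terms must be balanced. Finally, Theorem~\ref{abs-theo} produces $(u_\varepsilon,v_\varepsilon)$ in the stated spaces with the enclosure~(\ref{super-sub-ineq}), and since $\underline u,\underline v>0$ in $\Omega$ by the strong maximum principle applied to~(\ref{10}), the solution is positive, completing the proof.
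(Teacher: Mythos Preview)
Your overall strategy matches the paper's: apply Theorem~\ref{abs-theo} with the pairs from~(\ref{10}) and~(\ref{super-first}), verify the subsolution inequality via the upper bound~(\ref{7.1}) on $\underline v$, and handle the supersolution inequality by a three-case analysis with $\sigma>0$ under~(\ref{H2}) and $\sigma\in(-1,0)$ under~(\ref{hip-superlinear}) or~(\ref{c}). The exponent bookkeeping you describe is exactly what the paper does.

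There is, however, a technical slip in your supersolution estimate. You invoke Lemma~\ref{alpha_beta_ineq}$(b)$ to bound $-\log(\overline v+\varepsilon)\le \overline v+C$, but the constant $C$ in part~$(b)$ depends on $\varepsilon$ and blows up like $|\log\varepsilon|$ as $\varepsilon\to0$. This would force your choice of $\lambda$ to depend on $\varepsilon$, whereas the whole point of the construction is that $(\overline u,\overline v)$ be a supersolution \emph{uniformly} for $\varepsilon\in(0,\varepsilon_0)$. The paper avoids this by using part~$(a)$ instead:
\[
|\log(\overline v+\varepsilon)|\le(\overline v+\varepsilon)^{-\sigma_1}+C_{\sigma_1,\alpha^+}(\overline v+\varepsilon)^{\alpha^+},
\]
with constants independent of $\varepsilon$; then $(\overline v+\varepsilon)^{-\sigma_1}\le\overline v^{-\sigma_1}$, and since $\overline v$ is bounded below on $\Omega$ by~(\ref{below-above-sigma}) or~(\ref{strong-principle}) (using $\tilde d(x)\ge\delta'>0$ on $\Omega$), this term is controlled uniformly in $\varepsilon$. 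Your argument is easily repaired along these lines, but as written the uniformity claim is not justified.

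One further point: after producing $(u_\varepsilon,v_\varepsilon)$ via Theorem~\ref{abs-theo}, the paper's proof continues and shows that the right-hand sides satisfy pointwise bounds of the form $A_i\,d(x)^{-\sigma_i}$ with $\sigma_i\in(0,1/N)$, uniformly in $\varepsilon$, and then invokes \cite[Lemma~2]{Alves-Moussaoui} to conclude that $(u_\varepsilon,v_\varepsilon)$ is bounded in $C^{1,\nu}(\overline\Omega)\times C^{1,\nu}(\overline\Omega)$. This additional regularity is not part of the theorem's stated conclusion, so your outline is complete for the statement as written; but it is essential input for the proof of Theorem~\ref{T1}, which passes to the limit $\varepsilon\to0$ via Arzel\`a--Ascoli, and you should be aware that the paper establishes it here.
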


\begin{proof}
First, let us show that $(\underline{u},\underline{v})$ is a subsolution for
problem (\ref{System-log-perturbate}) for all $\epsilon \in (0,\epsilon
_{0}) $. To this end, pick $\varepsilon _{0}\leq 1/2$. Then, from (\ref{10})
and (\ref{7.1}), for all $\epsilon \in (0,\varepsilon _{0}),$ one has%
\begin{equation*}
\begin{array}{l}
-\Delta _{p(x)}\underline{u}={\lambda }^{-1}\leq -\gamma \log (K_{2}\lambda
^{\frac{-1}{q^{-}-1}}+\varepsilon _{0})\leq -\gamma \log (\underline{v}%
(x)+\epsilon ) \\ 
\leq -\gamma \log (\underline{v}(x)+\epsilon )+\theta \underline{v}%
(x)^{\alpha (x)}\text{ \ in }\Omega%
\end{array}%
\end{equation*}%
and 
\begin{equation*}
\begin{array}{l}
-\Delta _{q(x)}\underline{v}=\lambda ^{-1}\leq -\gamma \log (K_{1}\lambda ^{%
\frac{-1}{p^{-}-1}}+\varepsilon _{0})\leq -\gamma \log (\underline{u}%
(x)+\epsilon ) \\ 
\leq -\gamma \log (\underline{u}(x)+\epsilon )+\theta \underline{u}%
(x)^{\beta (x)}\text{ \ in }\Omega ,%
\end{array}%
\end{equation*}%
for all $\gamma ,\theta >0$, provided that $\lambda >1$ is sufficiently
large.

Next, we will show that $(\overline{u},\overline{v})$ is a supersolution for
problem (\ref{System-log-perturbate}) for all $\epsilon \in (0,\epsilon
_{0}) $. Denote by $\delta ^{\prime }:=dist(\partial \tilde{\Omega},\partial
\Omega )$ and fix $\varepsilon _{0}\in (0,1).$ By Lemma \ref{alpha_beta_ineq}%
, there are constants $\sigma _{1},\sigma _{2}\in (0,1)$ and $C_{\sigma
_{1},\alpha ^{+}},C_{\sigma _{2,\beta ^{+}}}>0$ such that, for all $\epsilon
\in (0,\epsilon _{0})$, one has%
\begin{equation}
\begin{array}{l}
-\gamma \log (\overline{v}+\epsilon )+\theta \overline{v}^{\alpha (x)}\leq
\gamma (\frac{1}{(\overline{v}+\epsilon )^{\sigma _{1}}}+C_{\sigma
_{1},\alpha ^{+}}(\overline{v}+\epsilon )^{\alpha ^{+}})+\theta \overline{v}%
^{\alpha (x)}%
\end{array}
\label{log1}
\end{equation}%
and%
\begin{equation}
\begin{array}{l}
-\gamma \log (\overline{u}+\epsilon )+\theta \overline{u}^{\beta (x)}\leq
\gamma (\frac{1}{(\overline{u}+\epsilon )^{\sigma _{2}}}+C_{\sigma
_{2},\beta ^{+}}(\overline{u}+\epsilon )^{\beta ^{+}})+\theta \overline{u}%
^{\beta (x)}.%
\end{array}
\label{log2}
\end{equation}

If (\ref{H2}) holds, it follows from (\ref{below-above-sigma}), (\ref{log1}%
), (\ref{log2}) and for $\sigma >0$ in (\ref{super-first}), that 
\begin{equation}
\begin{array}{l}
-\gamma \log (\overline{v}+\epsilon )+\theta \overline{v}^{\alpha (x)}\leq
\gamma (\frac{1}{\overline{v}^{\sigma _{1}}}+C_{\sigma _{1},\alpha ^{+}}(%
\overline{v}+\epsilon _{0})^{\alpha ^{+}})+\theta (\overline{v}+1)^{\alpha
^{+}} \\ 
\leq \frac{\gamma }{\overline{v}^{\sigma _{1}}}+2^{\alpha ^{+}-1}(\gamma
C_{\sigma _{1},\alpha ^{+}}+\theta )(\overline{v}^{\alpha ^{+}}+1) \\ 
\leq \frac{\gamma }{(\overline{v})^{\sigma _{1}}}+2^{\alpha ^{+}-1}(\gamma
C_{\sigma _{1},\alpha ^{+}}+\theta )C_{1}^{\prime }(\lambda ^{\frac{\sigma
\alpha ^{+}}{q^{-}-1}}+1) \\ 
\frac{\gamma }{(C_{0}^{\prime }\lambda ^{\frac{\sigma }{q^{+}-1+\tau _{2}}%
}\min \{\delta ,\delta ^{^{\prime }}\})^{\sigma _{1}}}+2^{\alpha
^{+}-1}(\gamma C_{\sigma _{1},\alpha ^{+}}+\theta )(C_{1}^{\prime }\lambda ^{%
\frac{\sigma \alpha ^{+}}{q^{-}-1}}+1)\leq \lambda ^{\sigma }\text{ \ in }%
\Omega ,%
\end{array}
\label{30}
\end{equation}%
and%
\begin{equation}
\begin{array}{l}
-\gamma \log (\overline{u}+\epsilon )+\theta \overline{u}^{\beta (x)}\leq
\gamma (\frac{1}{\overline{u}^{\sigma _{2}}}+C_{\sigma _{2,\beta ^{+}}}(%
\overline{u}+\epsilon )^{\beta ^{+}})+\theta (\overline{u}+1)^{\beta ^{+}}
\\ 
\leq \frac{\gamma }{\overline{u}^{\sigma _{2}}}+2^{\beta ^{+}-1}(\gamma
C_{\sigma _{2},\beta ^{+}}+\theta )(\overline{u}^{\beta ^{+}}+1) \\ 
\leq \frac{\gamma }{(C_{0}\lambda ^{\frac{\sigma }{p^{+}-1+\tau _{1}}}\min
\{\delta ,\delta ^{\prime }\})^{\sigma _{2}}}+2^{\beta ^{+}}(\gamma
C_{\sigma _{2},\alpha ^{+}}+\theta )(C_{1}\lambda ^{\frac{\sigma \beta ^{+}}{%
p^{-}-1}}+1)\leq \lambda ^{\sigma }\text{ \ in }\Omega ,%
\end{array}
\label{30*}
\end{equation}%
for all $\gamma ,\theta >0$, provided that $\lambda $ is large enough.

If (\ref{hip-superlinear}) is satisfied, combining Lemma 1 with (\ref%
{2-above}) and (\ref{strong-principle}), by (\ref{log1}), (\ref{log2}) and
for $\sigma \in (-1,0)$ in (\ref{super-first}), we get%
\begin{equation}
\begin{array}{l}
-\gamma \log (\overline{v}+\epsilon )+\theta \overline{v}^{\alpha (x)}\leq
\gamma (\frac{1}{(\overline{v}+\epsilon )^{\sigma _{1}}}+C_{\sigma
_{1},\alpha ^{-}}(\overline{v}+\epsilon )^{\alpha ^{-}})+\theta \overline{v}%
^{\alpha (x)} \\ 
\leq \gamma (\frac{1}{\overline{v}^{\sigma _{1}}}+C_{\sigma _{1},\alpha
^{-}}2^{\alpha ^{-}-1}{\overline{v}}^{\alpha ^{-}}+C_{\sigma _{1},\alpha
^{-}}\varepsilon _{0}^{\alpha ^{-}}2^{\alpha ^{-}-1})+\theta {\overline{v}}%
^{\alpha (x)} \\ 
\leq \gamma (\frac{1}{(c_{0}\delta ^{\prime })^{\sigma _{1}}}+C_{\sigma
_{1},\alpha ^{-}}2^{\alpha ^{-}-1}{\lambda }^{\frac{\sigma \alpha ^{-}}{%
q^{+}-1}}+C_{\sigma _{1},\alpha ^{-}}2^{\alpha ^{-}-1})+\theta c_{2}^{\prime
}{\lambda }^{\frac{\sigma \alpha (x)}{q^{+}-1}} \\ 
\leq \gamma (\frac{1}{(c_{0}\delta ^{\prime })^{\sigma _{1}}}+C_{\sigma
_{1},\alpha ^{-}}2^{\alpha ^{-}-1}{\lambda }^{\frac{\sigma \alpha ^{-}}{%
q^{+}-1}}+C_{\sigma _{1},\alpha ^{-}}2^{\alpha ^{-}-1})+\theta c_{2}^{\prime
}{\lambda }^{\frac{\sigma \alpha ^{-}}{q^{+}-1}}\leq \lambda ^{\sigma }\text{
\ in }\Omega%
\end{array}
\label{31}
\end{equation}%
and 
\begin{equation}
\begin{aligned} -\gamma \log (\overline{u}+\epsilon )+\overline{u}^{\beta
(x)} &\leq \frac{\gamma }{(c_{0}\delta^{\prime} )^{\sigma _{1}}}+\gamma
C_{\sigma_{1},\beta ^{-}}2^{\beta^{-}-1}
{\lambda}^{\frac{\sigma\alpha^{-}}{q^{+}-1}}+\gamma C_{\sigma_1,
\beta^{-}}2^{\beta^{-}-1} \\
&+c^{\prime}_{2}{\lambda}^{\frac{\sigma\beta^{-}}{p^{+}-1}} \\ & \leq \lambda
^{\sigma }\text{ \ in }\Omega .\end{aligned}  \label{31*}
\end{equation}%
for $\gamma >0$ small enough, for all $\theta >0$ and all $\varepsilon \in
(0,\varepsilon _{0})$, provided that $\lambda $ is sufficiently large.

Finally, if (\ref{c}) holds, using (\ref{below-above-sigma}), (\ref%
{strong-principle}), (\ref{log1}) and (\ref{log2}), for $\sigma \in (-1,0)$
in (\ref{super-first}), we obtain%
\begin{equation}
\begin{array}{l}
-\gamma \log (\overline{v}+\epsilon )+\theta \overline{v}^{\alpha (x)}\leq
\gamma (\frac{1}{(\overline{v}+\epsilon )^{\sigma _{1}}}+C_{\sigma
_{1},\alpha ^{+}}(\overline{v}+1)^{\alpha ^{+}})+\theta \overline{v}^{\alpha
(x)} \\ 
\leq \gamma (\frac{1}{\overline{v}^{\sigma _{1}}}+C_{\sigma _{1},\alpha
^{+}}(\overline{v}^{\alpha ^{+}}+1))+\theta (\overline{v}+1)^{\alpha ^{+}}
\\ 
\leq \gamma (\frac{1}{(c_{0}\delta ^{\prime })^{\sigma _{1}}}+C_{\sigma
_{1},\alpha ^{+}}(\overline{v}^{\alpha ^{+}}+1))+2^{\alpha ^{+}}\theta (%
\overline{v}^{\alpha ^{+}}+1) \\ 
\leq \gamma (\frac{1}{(c_{0}\delta ^{\prime })^{\sigma _{1}}}+C_{\sigma
_{1},\alpha ^{+}}(\lambda ^{\frac{\sigma \alpha ^{+}}{q^{-}-1}%
}+1))+2^{\alpha ^{+}}\theta (\lambda ^{\frac{\sigma \alpha ^{+}}{q^{-}-1}%
}+1)\leq \lambda ^{\sigma }\text{ \ in }\overline{\Omega },%
\end{array}
\label{32}
\end{equation}
and similarly 
\begin{equation}
\begin{array}{c}
-\gamma \log (\overline{u}+\epsilon )+\theta \overline{u}^{\beta (x)}\leq
\gamma (\frac{1}{\overline{u}^{\sigma _{2}}}+C_{\sigma _{2},\beta ^{+}}(%
\overline{u}+1)^{\beta ^{+}})+\theta \overline{u}^{\beta (x)}\leq \lambda
^{\sigma }\text{ \ in }\overline{\Omega },%
\end{array}
\label{32*}
\end{equation}%
for all $\gamma ,\theta >0$ small and all $\varepsilon \in (0,\varepsilon
_{0})$, provided that $\lambda >0$ is large enough.

Consequently, it turns out from (\ref{30}), (\ref{30*}), (\ref{31}), (\ref%
{31*}), (\ref{32}) and (\ref{32*}) that 
\begin{equation*}
\begin{array}{c}
\int_{\Omega }\left\vert \nabla \overline{u}\right\vert ^{p(x)-2}\nabla 
\overline{u}\nabla \varphi \text{ }dx\geq \int_{\Omega }(-\gamma \log (%
\overline{v}+\epsilon )+\theta \overline{v}^{\alpha (x)})\varphi \text{ }dx%
\end{array}%
\end{equation*}%
and%
\begin{equation*}
\begin{array}{c}
\int_{\Omega }\left\vert \nabla \overline{v}\right\vert ^{q(x)-2}\nabla 
\overline{v}\nabla \psi \text{ }dx\geq \int_{\Omega }(-\gamma \log (%
\overline{u}+\epsilon )+\theta \overline{u}^{\alpha (x)})\psi \text{ }dx,%
\end{array}%
\end{equation*}%
for all $\left( \varphi ,\psi \right) \in W_{0}^{1,p(x)}\left( \Omega
\right) \times W_{0}^{1,q(x)}\left( \Omega \right) $ with $\varphi ,\psi
\geq 0$. This shows that $(\overline{u},\overline{v})$ is a supersolution
for (\ref{System-log-perturbate}) for all $\epsilon \in (0,\epsilon _{0})$.

Then, owing to Theorem \ref{abs-theo} we conclude that the perturbed problem %
\eqref{System-log-perturbate} has a solution $(u_{\varepsilon
},v_{\varepsilon })\in W_{0}^{1,p(x)}(\Omega )\times W_{0}^{1,q(x)}(\Omega )$
within $[\underline{u},\overline{u}]\times \lbrack \underline{v},\overline{v}%
]$, for all $\epsilon \in (0,\varepsilon _{0}).$ Moreover, according to
Lemma \ref{alpha_beta_ineq} combined with (\ref{7.2}) and (\ref%
{super-sub-ineq}), we have that for $\sigma _{1},\sigma _{2}\in (0,1/N)$,
there are constants $C_{\sigma _{1}},C_{\sigma _{2}}>0$ such that 
\begin{equation}
\begin{array}{l}
-\gamma \log (v_{\varepsilon }+\varepsilon )+\theta v^{\alpha
(x)}_{\varepsilon}\leq \gamma (v_{\varepsilon }^{-\sigma _{1}}+C_{\sigma
_{1}}v_{\varepsilon }^{\sigma _{1}})+\theta v_{\varepsilon }^{\alpha (x)} \\ 
=v_{\varepsilon }^{-\sigma _{1}}(\gamma +\gamma C_{\sigma
_{1}}v_{\varepsilon }^{2\sigma _{1}}+\theta v^{\alpha (x)+\sigma
_{1}}_{\varepsilon}) \\ 
\leq \underline{v}^{-\sigma _{1}}(\gamma +\gamma C_{\sigma _{1}}\overline{v}%
^{2\sigma _{1}}+\theta \overline{v}^{\alpha (x)+\sigma _{1}}) \\ 
\leq (k_{0}d(x))^{-\sigma _{1}}(\gamma +\gamma C_{\sigma _{1}}\overline{v}%
^{2\sigma _{1}}+\theta \overline{v}^{\alpha (x)+\sigma _{1}})\leq
A_{1}d(x)^{-\sigma _{1}}\text{ in }\Omega%
\end{array}%
\end{equation}%
and%
\begin{equation}
\begin{array}{l}
-\gamma \log u_{\varepsilon }+u_{\varepsilon }^{\beta (x)}\leq \gamma
(u_{\varepsilon }^{-\sigma _{2}}+C_{\sigma _{2}}u_{\varepsilon }^{\sigma
_{2}})+\theta u_{\varepsilon }^{\beta (x)} \\ 
=\underline{u}^{-\sigma _{2}}(\gamma +\gamma C_{\sigma _{2}}\overline{u}%
^{2\sigma _{2}}+\theta \overline{u}^{\beta (x)+\sigma _{2}}) \\ 
\leq (k_{0}d(x))^{-\sigma _{2}}(\gamma +\gamma C_{\sigma _{2}}\overline{u}%
^{2\sigma _{2}}+\theta \overline{u}^{\beta (x)+\sigma _{2}})\leq
A_{2}d(x)^{-\sigma _{2}}\text{ in }\Omega ,%
\end{array}%
\end{equation}%
for some positive constants $A_{1}$ and $A_{2}$. Then, thanks to \cite[Lemma
2]{Alves-Moussaoui}, we deduce that $(u_{\varepsilon },v_{\varepsilon })\in
C^{1,\nu }(\overline{\Omega })\times C^{1,\nu }(\overline{\Omega }),$ for
certain $\nu \in (0,1)$.
\end{proof}

\begin{proof}[Proof of Theorem \protect\ref{T1}]
Set $\epsilon :=\frac{1}{n}$ for $n\geq 1/\varepsilon _{0}$. By Theorem \ref%
{perturbed-solutions}, we know that there exists a positive solution $%
(u_{n},v_{n}):=(u_{\frac{1}{n}},v_{\frac{1}{n}})$ bounded in $C^{1,\nu }(%
\overline{\Omega })\times C^{1,\nu }(\overline{\Omega }),$ for certain $\nu
\in (0,1)$, for problem%
\begin{equation}
\left\{ 
\begin{array}{ll}
-\Delta _{p(x)}u_{n}=-\gamma \log (|v_{n}|+\frac{1}{n})+
\theta|v_{n}|^{\alpha (x)} & \ \mbox{in}\ \Omega , \\ 
-\Delta _{q(x)}v_{n}=- \gamma\log (|u_{n}|+\frac{1}{n})+\theta|u_{n}|^{\beta
(x)} & \ \mbox{in}\ \Omega , \\ 
u_{n}=v_{n}=0 & \ \mbox{on}\ \partial \Omega .%
\end{array}%
\right.  \label{pn2}
\end{equation}%
Moreover, the property formulated in (\ref{super-sub-ineq}) holds true.
Employing Arzel\`{a}-Ascoli's theorem, we may pass to the limit in $C^{1}(%
\overline{\Omega })\times C^{1}(\overline{\Omega })$ and the limit functions 
$(u,v)\in C^{1}(\overline{\Omega })\times C^{1}(\overline{\Omega })$ satisfy
(\ref{p}) with $(u,v)\in \lbrack \underline{u},\overline{u}]\times \lbrack 
\underline{v},\overline{v}]$. This completes the proof.
\end{proof}

\section{Proof of Theorem \protect\ref{T2}}

\label{S5}

This section is devoted to the proof of Theorem \ref{T2}. For $\epsilon >0,$
let consider the regularized problem%
\begin{equation}
\left\{ 
\begin{array}{ll}
-\Delta _{p(x)}u=-\gamma \log (|v|+\epsilon )+\theta (|v|+\epsilon )^{\alpha
(x)} & in\text{ }\Omega , \\ 
-\Delta _{q(x)}v=-\gamma \log (|u|+\epsilon )+\theta (|u|+\epsilon )^{\beta
(x)} & in\text{ }\Omega , \\ 
u=v=0 & on\text{ }\Omega \text{.}%
\end{array}%
\right.  \label{System-perturbated}
\end{equation}%
Our demonstration strategy will be to show- by applying the well known
result due to Rabinowitz \cite{Rabinowitz}- that, for each $\lambda >0,$
system \eqref{System-perturbated} possesses a positive solution $%
(u_{\epsilon },v_{\epsilon })$ in $W_{0}^{1,p(x)}(\Omega )\times
W_{0}^{1,q(x)}(\Omega )$, and then derive a solution of (\ref{p}) by taking
the limit $\epsilon \rightarrow 0$.\newline

\subsection{\textbf{Existence result for the regularized system}}

Fix $\epsilon >0$ and for each pair $(f,g)\in L^{p^{\prime }(x)}(\Omega
)\times L^{q^{\prime }(x)}(\Omega ),$ let consider the auxiliary problem%
\begin{equation}
\left\{ 
\begin{array}{l}
-\Delta _{p(x)}u=\lambda (-\gamma \log (|g|+\epsilon )+\theta (|g|+\epsilon
)^{\alpha (x)})\ \mbox{in}\ \Omega , \\ 
-\Delta _{q(x)}v=\lambda (-\gamma \log (|f|+\epsilon )+\theta (|f|+\epsilon
)^{\beta (x)})\ \mbox{in}\ \Omega , \\ 
u=v=0\ \mbox{on}\ \partial \Omega .%
\end{array}%
\right.  \label{System-perturbated-fg}
\end{equation}%
Observe that:

\begin{itemize}
\item $-\log (|g|+\epsilon )\in L^{p^{\prime }(x)}(\Omega )$: Indeed,
consider $\theta >0$ such that $0<\theta (p^{\prime })^{+}\leq q^{\prime
}(x) $ for all $x\in \overline{\Omega }$. By Lemma \ref{alpha_beta_ineq},
one has%
\begin{equation*}
|\log (|g(x)|+\epsilon )|^{p^{\prime }(x)}\leq (|g(x)|^{\theta p^{\prime
}(x)}+C)\leq ((1+|g(x)|)^{\theta p^{\prime }(x)}+C).
\end{equation*}%
From $(\ref{H1})$ the claim follows.

\item $(|g(x)|+\epsilon )^{\alpha (x)}\in L^{p^{\prime }(x)}$: By $(\ref{H1}%
) $ notice that%
\begin{equation*}
(|g(x)|+\epsilon )^{\alpha (x)p^{\prime }(x)}\leq (|g(x)|+1)^{\alpha
(x)p^{\prime }(x)}\leq (|g(x)|+1)^{q^{\prime }(x)}.
\end{equation*}%
Since $W^{1,q(x)}(\Omega )\hookrightarrow L^{q^{\prime }(x)}(\Omega )$ the
claim is proved.
\end{itemize}

In the same manner we have $|\log (|f|+\epsilon )|\in L^{q^{\prime }}(\Omega
)$ and $(|f|+\epsilon )^{\beta (x)}\in L^{q^{\prime }(x)}$ for all $f\in
L^{p^{\prime }(x)}(\Omega )$. Then, on account of the above remarks, the
unique solvability of $(u,v)\in W_{0}^{1,p(x)}(\Omega )\times
W_{0}^{1,q(x)}(\Omega )$ in (\ref{System-perturbated-fg}) is readily derived
from Minty-Browder Theorem. Therefore, the solution operator 
\begin{equation*}
\mathcal{T}:\mathbb{R}^{+}\times L^{p^{\prime }(x)}(\Omega )\times
L^{q^{\prime }(x)}(\Omega )\rightarrow W_{0}^{1,p(x)}(\Omega )\times
W_{0}^{1,q(x)}(\Omega )
\end{equation*}%
is well defined.

\begin{lemma}
\label{L1}The operator $\mathcal{T}:\mathbb{R}^{+}\times L^{p^{\prime
}(x)}(\Omega )\times L^{q^{\prime }}(\Omega )\rightarrow L^{p^{\prime
}(x)}(\Omega )\times L^{q^{\prime }(x)}(\Omega )$ is continuous and compact.
\end{lemma}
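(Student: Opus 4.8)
The plan is to realize $\mathcal{T}$ as a composition $\mathcal{T}=\mathcal{I}\circ\mathcal{S}\circ\mathcal{N}$ of three maps: a superposition (Nemytskii) map $\mathcal{N}$ producing the right-hand sides of \eqref{System-perturbated-fg}, the solution operator $\mathcal{S}$ of the two decoupled Dirichlet problems, and the canonical embedding $\mathcal{I}\colon W_0^{1,p(x)}(\Omega)\times W_0^{1,q(x)}(\Omega)\hookrightarrow L^{p'(x)}(\Omega)\times L^{q'(x)}(\Omega)$. Continuity of $\mathcal{T}$ then comes from continuity of each factor, and compactness from the fact that $\mathcal{S}\circ\mathcal{N}$ is bounded (sends bounded sets to bounded sets) while $\mathcal{I}$ is compact.

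For $\mathcal{N}$, namely
\[
\mathcal{N}(\lambda,f,g)=\big(\lambda[-\gamma\log(|g|+\epsilon)+\theta(|g|+\epsilon)^{\alpha(x)}],\ \lambda[-\gamma\log(|f|+\epsilon)+\theta(|f|+\epsilon)^{\beta(x)}]\big),
\]
I would use that $s\mapsto\log(|s|+\epsilon)$ and $s\mapsto(|s|+\epsilon)^{\alpha(x)}$ are Carath\'{e}odory functions which, thanks to $|s|+\epsilon\ge\epsilon>0$, carry no singularity, together with the subcritical growth bounds $|\log(|g|+\epsilon)|^{p'(x)}\le C(|g|^{q'(x)}+1)$ and $(|g|+\epsilon)^{\alpha(x)p'(x)}\le C(|g|^{q'(x)}+1)$ (and symmetrically for $f$, with $p'$ and $q'$ interchanged) --- these are exactly the estimates already checked in the bulleted remarks above, via Lemma \ref{alpha_beta_ineq}(b), \eqref{H1} and \eqref{H2'}. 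The standard continuity and boundedness of Nemytskii operators between variable exponent Lebesgue spaces then give that $\mathcal{N}$ is continuous and bounded from $\mathbb{R}^{+}\times L^{p'(x)}(\Omega)\times L^{q'(x)}(\Omega)$ into $L^{p'(x)}(\Omega)\times L^{q'(x)}(\Omega)$; the linear dependence on $\lambda$ is harmless once $\lambda$ stays in a bounded set.

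For $\mathcal{S}=\big((-\Delta_{p(x)})^{-1},(-\Delta_{q(x)})^{-1}\big)$, I would first note that $W_0^{1,p(x)}(\Omega)\hookrightarrow L^{p(x)}(\Omega)$ continuously, so by H\"{o}lder's inequality $L^{p'(x)}(\Omega)$ embeds continuously into the dual $(W_0^{1,p(x)}(\Omega))'$ (and likewise for $q$). Since $-\Delta_{p(x)}\colon W_0^{1,p(x)}(\Omega)\to (W_0^{1,p(x)}(\Omega))'$ is continuous, bounded, strictly monotone and coercive, it is a bijection whose inverse is bounded (by the coercivity estimate $\|\nabla u\|^{p^{-}-1}\le C\|h\|$ for $\|\nabla u\|\ge 1$). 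Continuity of the inverse --- the delicate point --- is obtained from the $(S^{+})$ property: if $h_n\to h$ and $u_n=(-\Delta_{p(x)})^{-1}h_n$, then $\{u_n\}$ is bounded, $u_n\rightharpoonup u=(-\Delta_{p(x)})^{-1}h$ (Minty's trick), and $\langle -\Delta_{p(x)}u_n,u_n-u\rangle=\langle h_n,u_n-u\rangle\to0$, so $u_n\to u$ strongly, exactly as in the proof of Theorem \ref{abs-theo}. The same applies to $-\Delta_{q(x)}$, so $\mathcal{S}$ is continuous and bounded into $W_0^{1,p(x)}(\Omega)\times W_0^{1,q(x)}(\Omega)$.

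Finally, under \eqref{H1} the embeddings $W_0^{1,p(x)}(\Omega)\hookrightarrow L^{p'(x)}(\Omega)$ and $W_0^{1,q(x)}(\Omega)\hookrightarrow L^{q'(x)}(\Omega)$ are compact (cf. \eqref{20}), so $\mathcal{I}$ is compact, and the composition $\mathcal{T}=\mathcal{I}\circ\mathcal{S}\circ\mathcal{N}$ is continuous and compact. I expect the main obstacle to be precisely the continuity (rather than the mere bijectivity) of the inverse $p(x)$- and $q(x)$-Laplacians, which is where the $(S^{+})$ property is indispensable; the integrability bookkeeping for $\mathcal{N}$ is routine given Lemma \ref{alpha_beta_ineq} and assumptions \eqref{H1}--\eqref{H2'}.
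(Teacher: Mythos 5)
Your proof is correct and uses the same ingredients as the paper's, but with a different organization. The paper works directly with sequences: take $(\lambda_n,f_n,g_n)\to(\lambda,f,g)$, test the equations defining $u_n=\mathcal{T}(\lambda_n,f_n,g_n)_1$ and $u=\mathcal{T}(\lambda,f,g)_1$ with $u_n-u$, and show via Lemma~\ref{alpha_beta_ineq}, H\"older's inequality and dominated convergence that the monotonicity term $\int_\Omega \langle|\nabla u_n|^{p(x)-2}\nabla u_n-|\nabla u|^{p(x)-2}\nabla u,\nabla(u_n-u)\rangle\,dx$ tends to zero, from which $u_n\to u$ in $W_0^{1,p(x)}(\Omega)$ --- this is exactly your $(S^+)$-property step, only inlined rather than stated abstractly; compactness is then handled with the remark that the same argument applies to bounded sets. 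Your factorization $\mathcal{T}=\mathcal{I}\circ\mathcal{S}\circ\mathcal{N}$ isolates the roles of the three ingredients: the growth bounds of Lemma~\ref{alpha_beta_ineq} together with \eqref{H1}--\eqref{H2'} control $\mathcal{N}$, the coercivity and $(S^+)$ property control $\mathcal{S}$, and all the compactness resides in the embedding $\mathcal{I}$. This makes the compactness half of the lemma noticeably more transparent than the paper's terse ``a quite similar argument'': a bounded set is sent by $\mathcal{S}\circ\mathcal{N}$ into a bounded subset of $W_0^{1,p(x)}(\Omega)\times W_0^{1,q(x)}(\Omega)$, and the compact Sobolev embedding \eqref{20} (with $r=p'$, $r=q'$, which is subcritical under \eqref{H1}) does the rest. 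So this is a clean reorganization rather than a different proof; the underlying estimates and hypotheses are identical.
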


\begin{proof}
Consider a sequence $(\lambda _{n},f_{n},g_{n})\rightarrow (\lambda ,f,g)$
in $\mathbb{R}^{+}\times L^{p^{\prime }(x)}(\Omega )\times L^{q^{\prime
}(x)}(\Omega )$ and $(u_{n},v_{n}):=\mathcal{T}(\lambda _{n},f_{n},g_{n}).$
Using $u_{n}$ as a test function, one gets%
\begin{equation}
\begin{array}{l}
\int_{\Omega }|\nabla u_{n}|\ dx=\lambda _{n}\left( \int_{\Omega }-\gamma
u_{n}\log (|g_{n}|+\epsilon )+\theta u_{n}(|g_{n}|+\epsilon )^{\alpha
(x)}\right) \ dx \\ 
\leq C\Vert u_{n}\Vert _{L^{p(x)}(\Omega )}\Vert \log (|g_{n}|+\epsilon
)\Vert _{L^{p^{\prime }(x)}(\Omega )}+C\Vert u_{n}\Vert _{L^{p(x)}(\Omega
)}\Vert (|g_{n}|+\epsilon )^{\alpha (x)}\Vert _{L^{p^{\prime }(x)}(\Omega )}.%
\end{array}
\label{f-conv}
\end{equation}%
Since $\{g_{n}\}$ is bounded in $L^{q\prime (x)}(\Omega )$ by Lemma \ref%
{alpha_beta_ineq}, $\{u_{n}\}$ is bounded in $W^{1,p(x)}(\Omega ).$ Let $%
(u,v):=\mathcal{T}(\lambda ,f,g).$ Using $u_{n}-u$ as a test function we
have 
\begin{equation}
\begin{array}{l}
\int_{\Omega }\langle |\nabla u_{n}|^{p(x)-2}\nabla u_{n}-|\nabla
u|^{p(x)-2}\nabla u,\nabla (u_{n}-u)\rangle \ dx \\ 
=\gamma \int_{\Omega }(\lambda \log (|g|+\epsilon )-\lambda \log
(|g_{n}|+\epsilon ))(u_{n}-u)\ dx \\ 
\text{ \ \ }+\theta \int_{\Omega }(\lambda (|g|+\epsilon )^{\alpha
(x)}-\lambda _{n}(|g_{n}|+\epsilon )^{\alpha (x)})(u_{n}-u)\ dx.%
\end{array}
\label{conv2-u_n}
\end{equation}%
Note that 
\begin{equation}
\begin{aligned} & \left|\gamma \int_{\Omega} (\lambda \log(|g|+\epsilon) -
\lambda_n \log(|g_n|+\epsilon))(u_n-u) \ dx \right|  \leq C \| u_n - u\|_{L^{p(x)}(\Omega)} \\ &\times  \| (\lambda
\log(|g|+\epsilon) - \lambda_n \log(|g_n|+\epsilon))
\|_{L^{p^{'}(x)}(\Omega)} ,
\end{aligned}  \label{f-conv-2}
\end{equation}%
where the constant $C$ does not depend on $n\in \mathbb{N}.$

In the sequel, up to a subsequence, we can assume that $g_{n}(x)\rightarrow
g(x)$ a.e in $\Omega $ and $|g_{n}(x)|\leq h$ a.e in $\Omega $ for some $%
h\in L^{q^{^{\prime }}(x)}(\Omega ).$ Then, by Lemma \ref{alpha_beta_ineq}
and the Lebesgue Theorem, we have 
\begin{equation}
\gamma \int_{\Omega }(\lambda \log (|g|+\epsilon )-\lambda _{n}\log
(|g_{n}|+\epsilon ))(u_{n}-u)\ dx\rightarrow 0.  \label{f-conv-3}
\end{equation}%
A similar reasoning leads to 
\begin{equation}
\theta \int_{\Omega }(\lambda (|g|+\epsilon )^{\alpha (x)}-\lambda
_{n}(|g_{n}|+\epsilon )^{\alpha (x)})(u_{n}-u)\ dx\rightarrow 0.
\label{f-conv-4}
\end{equation}%
Since $\{u_{n}\}$ is bounded in $W_{0}^{1,p(x)}(\Omega )$, from %
\eqref{conv2-u_n} we deduce that $u_{n}\rightarrow u$ in $%
W_{0}^{1,p(x)}(\Omega ).$ This proves that $\mathcal{T}$ is continuous.

In order to show that $\mathcal{T}$ is compact, it sufficies to prove that $%
\overline{\mathcal{T}(U)}$ is compact for all $U\subset E$ bounded. At this
point, a quite similar argument as above produces the desired conclusion.
This completes the proof.
\end{proof}

\begin{theorem}
\label{T5}Under assumptions (\ref{H1}) and (\ref{H2'}), problem (\ref%
{System-perturbated}) admits a solution $(u_{\epsilon },v_{\epsilon })$ for
all $\epsilon >0$.
\end{theorem}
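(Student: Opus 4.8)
The plan is to obtain $(u_{\epsilon},v_{\epsilon})$ as a fixed point of the solution operator $\mathcal{T}(1,\cdot,\cdot)$, that is, as a solution of (\ref{System-perturbated-fg}) with $\lambda=1$ and $(f,g)=(u,v)$, which is exactly a weak solution of (\ref{System-perturbated}). Since $\mathcal{T}:\mathbb{R}^{+}\times L^{p'(x)}(\Omega)\times L^{q'(x)}(\Omega)\to L^{p'(x)}(\Omega)\times L^{q'(x)}(\Omega)$ is continuous and compact by Lemma \ref{L1}, and $\mathcal{T}(0,\cdot,\cdot)\equiv(0,0)$ (for $\lambda=0$ the two equations in (\ref{System-perturbated-fg}) reduce to the homogeneous Dirichlet problems for $-\Delta_{p(x)}$ and $-\Delta_{q(x)}$, whose only solution is trivial), the result of Rabinowitz \cite{Rabinowitz} applies on a large ball of $L^{p'(x)}(\Omega)\times L^{q'(x)}(\Omega)$ provided one has an a priori bound, uniform in $\lambda\in[0,1]$, for every $(u,v)$ with $(u,v)=\mathcal{T}(\lambda,u,v)$. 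Producing that bound is the core of the argument, and the main obstacle.

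To get it I would test the two equations of $(u,v)=\mathcal{T}(\lambda,u,v)$ with $u$ and $v$, respectively. Bounding the logarithm via Lemma \ref{alpha_beta_ineq}(b), with the exponent there taken no larger than $q'(x)/p'(x)$ (resp. $p'(x)/q'(x)$), and using $(|v|+\epsilon)^{\alpha(x)}\le(1+|v|)^{\alpha^{+}}$, one arrives at
\[
\int_{\Omega}|\nabla u|^{p(x)}\,dx\le C\bigl(1+\bigl\||v|^{\alpha^{+}}\bigr\|_{L^{p'(x)}(\Omega)}\bigr)\|u\|_{1,p(x)}
\]
and the symmetric inequality for $v$. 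The constraint $\alpha^{+}<q'(x)/p'(x)$ in (\ref{H2'}), together with the scaling identity $\||w|^{\alpha^{+}}\|_{L^{p'(x)}}=\|w\|_{L^{\alpha^{+}p'(x)}}^{\alpha^{+}}$ and the embedding $W_{0}^{1,q(x)}(\Omega)\hookrightarrow L^{q'(x)}(\Omega)\hookrightarrow L^{\alpha^{+}p'(x)}(\Omega)$, gives $\||v|^{\alpha^{+}}\|_{L^{p'(x)}(\Omega)}\le C\|v\|_{1,q(x)}^{\alpha^{+}}$, so that, after the usual modular--norm estimates for $\|\nabla u\|_{L^{p(x)}}\ge1$,
\[
\|u\|_{1,p(x)}^{\,p^{-}-1}\le C\bigl(1+\|v\|_{1,q(x)}^{\alpha^{+}}\bigr),\qquad \|v\|_{1,q(x)}^{\,q^{-}-1}\le C\bigl(1+\|u\|_{1,p(x)}^{\beta^{+}}\bigr).
\]
Plugging the second estimate into the first yields $\|u\|_{1,p(x)}^{\,p^{-}-1}\le C\bigl(1+\|u\|_{1,p(x)}^{\alpha^{+}\beta^{+}/(q^{-}-1)}\bigr)$; since (\ref{H2'}) forces $\alpha^{+}<p^{-}-1$ and $\beta^{+}<q^{-}-1$, hence $\alpha^{+}\beta^{+}<(p^{-}-1)(q^{-}-1)$, the right-hand exponent is strictly below $p^{-}-1$ and $\|u\|_{1,p(x)}$ is bounded independently of $\lambda\in[0,1]$ and of the solution; likewise for $\|v\|_{1,q(x)}$, and hence, by the embedding, in $L^{p'(x)}(\Omega)\times L^{q'(x)}(\Omega)$. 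Thus the conjunction of the integrability conditions $\alpha^{+}<q'(x)/p'(x)$, $\beta^{+}<p'(x)/q'(x)$ with the subcriticality conditions $\alpha^{+}<p^{-}-1$, $\beta^{+}<q^{-}-1$ in (\ref{H2'}) is exactly what lets the coupled inequalities close.

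With this bound at hand, Rabinowitz's theorem \cite{Rabinowitz} --- equivalently, the Leray--Schauder degree of $I-\mathcal{T}(\lambda,\cdot,\cdot)$ on a large ball, which equals $1$ at $\lambda=0$ and is homotopy invariant since no fixed point sits on the boundary --- produces $(u_{\epsilon},v_{\epsilon})$ with $(u_{\epsilon},v_{\epsilon})=\mathcal{T}(1,u_{\epsilon},v_{\epsilon})$, i.e. a weak solution of (\ref{System-perturbated}). It then remains to check positivity. By (\ref{H2'}) one has $\alpha(x)\ge\alpha^{-}>\gamma/(\theta e)$ and $\beta(x)\ge\beta^{-}>\gamma/(\theta e)$ on $\Omega$, so Lemma \ref{alpha_beta_ineq}(c), applied with $\delta=\alpha(x)$ and $\delta=\beta(x)$ (the exponents ranging over compact subintervals of $(0,\infty)$ since $\alpha,\beta\in C(\overline{\Omega})$), yields a constant $m>0$ with $\theta t^{\alpha(x)}-\gamma\log t\ge m$ and $\theta t^{\beta(x)}-\gamma\log t\ge m$ for all $t>0$, $x\in\Omega$; taking $t=|v_{\epsilon}|+\epsilon$ and $t=|u_{\epsilon}|+\epsilon$ gives $-\Delta_{p(x)}u_{\epsilon}\ge m>0$ and $-\Delta_{q(x)}v_{\epsilon}\ge m>0$ in $\Omega$ with zero boundary data, whence the weak comparison principle for the $p(x)$- and $q(x)$-Laplacian forces $u_{\epsilon},v_{\epsilon}>0$ in $\Omega$, completing the proof.
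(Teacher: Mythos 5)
Your proposal is correct and follows essentially the same route as the paper: regularization has already been done, you use the solution operator $\mathcal{T}$, invoke Rabinowitz/Leray--Schauder continuation, derive a priori bounds by testing with $u$ and $v$ and exploiting the constraints in (\ref{H2'}) together with the embeddings, and obtain positivity from Lemma \ref{alpha_beta_ineq}(c) and the weak comparison principle. The only stylistic difference is that you establish the a priori bound uniformly for $\lambda\in[0,1]$ and argue directly via degree/homotopy invariance, whereas the paper uses the unboundedness of the Rabinowitz continuum and shows by contradiction that it must be unbounded in $\lambda$ (hence crosses $\lambda=1$); also, for the uniform positive lower bound $m$ in Lemma \ref{alpha_beta_ineq}(c) over the range of exponents, the paper avoids a compactness/continuity argument by splitting into the cases $t\ge1$ (use $\alpha^-$) and $t<1$ (use $\alpha^+$), which is a bit more elementary than your appeal to the exponent range being compact.
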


\begin{proof}
From Lemma \ref{L1} and invoking \cite{Rabinowitz}, there is an unbounded
continuum $\mathcal{C}$ of solutions of the equation $(u,v)=\mathcal{T}%
(\lambda ,u,v)$, that is, $(\lambda ,u,v)\in \mathcal{C}$ is a solution of (%
\ref{System-perturbated}).

On the other hand, by Lemma \ref{alpha_beta_ineq} the function $f(x)=\theta
x^{\delta }-\gamma \log x,$ for $x>0$, attains a strictly positive minimum
if $\delta >\frac{\gamma }{\theta e}$. Since $\alpha ^{-},\alpha ^{+}>\frac{%
\gamma }{\theta e},$ it follows that

\begin{itemize}
\item if $|u|+\epsilon \geq 1,$ then 
\begin{equation*}
-\gamma \log (|u|+\epsilon )+\theta (|u|+\epsilon )^{\alpha (x)}\geq -\gamma
\log (|u|+\epsilon )+\theta (|u|+\epsilon )^{\alpha _{-}}\geq m_{-}>0
\end{equation*}%
where $m_{-}=\min \{-\gamma \log x+\theta x^{\alpha _{-}},x>0\}.$

\item if $|u|+\epsilon <1,$ then 
\begin{equation*}
-\log (|u|+\epsilon )+(|u|+\epsilon )^{\alpha (x)}\geq -\log (|u|+\epsilon
)+(|u|+\epsilon )^{\alpha _{+}}\geq m_{+}>0
\end{equation*}%
where $m_{+}=\min \{-\gamma\log x+\theta x^{\alpha _{+}},x>0\}.$
\end{itemize}

Therefore, $-\Delta _{p(x)}u\geq m_{1}>0$ where $m_{1}=\min \{m_{-},m_{+}\}$
and, with a quite similar reasoning, we get $-\Delta _{q(x)}v\geq m_{2}>0$
for some $m_{2}>0.$ Thus, by maximum principle, $\mathcal{C}\setminus
\{(0,0,0)\}$ must be constituted by strictly positive functions.

Next, we show that the component $\mathcal{C}$ is unbounded with respect to $%
\lambda \geq 0$. By contradiction, suppose that there is $\lambda ^{\star
}>0 $ such that $(\lambda ,u,v)\in \mathcal{C}$ implies that $\lambda \leq
\lambda ^{\star }.$ Fix $0<\overline{\gamma }\leq \left( \frac{q^{\prime }}{%
p^{\prime }}\right) ^{-}.$ Using $u$ as a test function we get 
\begin{equation}
\int_{\Omega }|\nabla u|^{p(x)}dx=\lambda \left( \int_{\Omega }-\gamma u\log
(|v|+\varepsilon )dx+\int_{\Omega }\theta (|v|+\varepsilon )^{\alpha
(x)}udx\right)  \label{comp-1}
\end{equation}%
and%
\begin{equation}
\begin{array}{l}
\int_{\Omega }|-\gamma u\log (|v|+\varepsilon )|dx\leq \int_{\Omega }\gamma
|u|((|v|+\varepsilon )^{-\overline{\gamma }}+C_{\gamma ,\overline{\gamma }%
}(|v|+\varepsilon )^{\overline{\gamma }})dx \\ 
\leq C\Vert u\Vert _{L^{p(x)}(\Omega )}+C\int_{\Omega }|u||v|^{\overline{%
\gamma }}dx,%
\end{array}
\label{comp-2}
\end{equation}%
where $C$ depends on $\lambda ^{\ast },\varepsilon ,\gamma $ and $\overline{%
\gamma }$. Note that 
\begin{equation}
\begin{array}{l}
\int_{\Omega }\theta |u|(|v|+\varepsilon )^{\alpha (x)}dx\leq \int_{\Omega
}\theta |u|(|v|+1)^{\alpha (x)}dx \\ 
\leq C\Vert u\Vert _{L^{p(x)}(\Omega )}+C\int_{\Omega }|u||v|^{\alpha (x)}dx,%
\end{array}
\label{comp-3}
\end{equation}%
where $C$ depends on $\theta $ and $\varepsilon .$ Now we will estimate the
integral $\int_{\Omega }|u||v|^{\alpha (x)}\ dx$. We have $|v(x)|^{\alpha
(x)}\in L^{{p^{^{\prime }}(x)}}(\Omega )$. In order to prove this, note that%
\begin{equation*}
|v(x)|^{\alpha (x)p^{^{\prime }}(x)}\leq (1+|v(x)|)^{\alpha (x)p^{^{\prime
}}(x)}\leq (1+|v(x)|)^{q^{^{\prime }}(x)}.
\end{equation*}%
The last function belongs to $L^{1}(\Omega )$ because $W^{1,q(x)}(\Omega
)\hookrightarrow L^{q^{\star }(x)}(\Omega )\hookrightarrow L^{q^{^{\prime
}}(x)}(\Omega ).$ Thus by H\"{o}lder inequality we obtain 
\begin{equation*}
\int_{\Omega }|v|^{\alpha (x)}|u|\ dx\leq C\Vert |v|^{\alpha (x)}\Vert
_{L^{p^{^{\prime }}(x)}(\Omega )}\Vert u\Vert _{W^{1,p(x)}(\Omega )}.
\end{equation*}%
By H\"{o}lder inequality and considering all the possibilities for the norms \newline
$\Vert |v|^{\alpha (x)}\Vert _{L^{p^{^{\prime }}}(\Omega )}$ and $\Vert
|v|^{\alpha (x)}\Vert _{L^{\frac{q^{^{\prime }}(x)}{\alpha (x)p^{^{\prime
}}(x)}}(\Omega )}$, we get 
\begin{small}
\begin{equation}
\begin{array}{l}
\Vert |v|^{\alpha (x)}\Vert _{L^{p^{^{\prime }}(x)}(\Omega )}\leq C\Vert
|v|^{\alpha (x)p^{^{\prime }}(x)}\Vert _{L^{\frac{q^{^{\prime }}(x)}{\alpha
(x)p^{^{\prime }}(x)}}(\Omega )}^{\frac{1}{(p^{^{\prime }})^{+}}}+1+C\Vert
|v|^{\alpha (x)p^{^{\prime }}(x)}\Vert _{L^{\frac{q^{^{\prime }}(x)}{\alpha
(x)p^{^{\prime }}(x)}}(\Omega )}^{\frac{1}{(p^{^{\prime }})^{-}}} \\ 
\leq C\left( \left( \int_{\Omega }|v|^{q^{^{\prime }}(x)}\ dx\right) ^{\frac{%
1}{(p^{^{\prime }})^{+}}\left( \frac{\alpha (x)p^{^{\prime }}(x)}{%
q^{^{\prime }}(x)}\right) ^{+}}+1+\left( \int_{\Omega }|v|^{q^{^{\prime
}}(x)}\ dx\right) ^{\frac{1}{(p^{^{\prime }})^{+}}\left( \frac{\alpha
(x)p^{^{\prime }}(x)}{q^{^{\prime }}(x)}\right) ^{-}}\right) \\ 
+C\left( \left( \int_{\Omega }|v|^{q^{^{\prime }}(x)}\ dx\right) ^{\frac{1}{%
(p^{^{\prime }})^{-}}\left( \frac{\alpha (x)p^{^{\prime }}(x)}{q^{^{\prime
}}(x)}\right) ^{+}}+1+\left( \int_{\Omega }|v|^{q^{^{\prime }}(x)}\
dx\right) ^{\frac{1}{(p^{^{\prime }})^{-}}\left( \frac{\alpha (x)p^{^{\prime
}}(x)}{q^{^{\prime }}(x)}\right) ^{-}}\right) .%
\end{array}
\label{comp-4}
\end{equation}%
\end{small}
Using the embedding $W^{1,q(x)}(\Omega )\hookrightarrow L^{q^{^{\prime
}}(x)}(\Omega )$, considering all the possibilities for the norms $\Vert
v\Vert _{L^{q^{^{\prime }}(x)}(\Omega )}$, $\Vert \nabla u\Vert
_{L^{p(x)}(\Omega )}$, the estimates (\ref{comp-1}), (\ref{comp-2}), (\ref%
{comp-3}), (\ref{comp-4}) and repeating the arguments for the integral $%
\int_{\Omega }|u||v|^{\gamma }\ dx$ we obtain 
\begin{align*}
\Vert u& \Vert _{W^{1,p(x)}(\Omega )}\leq C{\Vert u\Vert _{W^{1,p(x)}(\Omega
)}^{\frac{1}{p^{-}}}}(1+\Vert v\Vert _{W^{1,q(x)}(\Omega )})^{\frac{\alpha
^{+}}{p^{-}}} \\
& +C{\Vert u\Vert _{W^{1,p(x)}(\Omega )}}^{\frac{1}{p^{+}}}(1+\Vert v\Vert
_{W^{1,q(x)}(\Omega )})^{\frac{\alpha ^{+}}{p^{-}}}+C\Vert u\Vert
_{W^{1,p(x)}(\Omega )}^{\frac{1}{p^{-}}} \\
& C\Vert u\Vert _{W^{1,p(x)}(\Omega
)}^{\frac{1}{p^{+}}}.
\end{align*}%
Thus, 
\begin{equation}
\begin{array}{l}
\Vert u\Vert _{W^{1,p(x)}(\Omega )}^{p^{-}}\leq C\Vert u\Vert
_{W^{1,p(x)}(\Omega )}(1+\Vert v_{n}\Vert _{W^{1,p(x)}(\Omega )})^{\alpha
^{+}}+C(1+\Vert u\Vert _{W^{1,p(x)}(\Omega )}).%
\end{array}
\label{lead-1}
\end{equation}%
A similar reasoning leads to 
\begin{equation}
\begin{array}{l}
\Vert v\Vert _{W^{1,p(x)}(\Omega )}^{q^{-}}\leq C\Vert v\Vert
_{W^{1,q(x)}(\Omega )}(1+\Vert u\Vert _{W^{1,p(x)}(\Omega )})^{\beta
^{+}}+C(1+\Vert v\Vert _{W^{1,q(x)}(\Omega )}).%
\end{array}
\label{lead-2}
\end{equation}%
Since $\alpha ^{+}+1<p^{-}$ and $\beta ^{+}+1<q^{-}$ it follows that the
component $\mathcal{C}$ is bounded, which is absurd. Consequently, $\mathcal{%
C}$ crosses the set $\{1\}\times L^{p^{^{\prime }}(x)}(\Omega )\times
L^{q^{^{\prime }}(x)}(\Omega )$ and this implies that there is a solution $%
(u_{\epsilon },v_{\epsilon })$ of (\ref{System-perturbated}). The proof is
completed.
\end{proof}

\subsection{\textbf{Passage to the limit}}

Set $\epsilon =\frac{1}{n}$ in (\ref{System-perturbated}) with any integer $%
n\geq 1$. By applying Theorem \ref{T5}, we know that there exist $u_{\frac{1%
}{n}}:=u_{n}$ and $v_{\frac{1}{n}}:=v_{n}$ that solve the problem (\ref%
{System-perturbated}) with $\epsilon =\frac{1}{n}$. \newline

\noindent \textbf{Claim:} The sequences $\{u_{n}\}$ and $\{v_{n}\}$ are
bounded in $W_{0}^{1,p(x)}(\Omega )$ and $W_{0}^{1,q(x)}(\Omega ),$
respectively and the weak limits (that exist up to a subsequence) are
strictly positive in $\Omega .$

First of all, we know that $-\Delta _{p(x)}u_{n}\geq m_{1}>0$ where $%
m_{1}=\min \{m_{-},m_{+}\}$. If $w_{1}$ denotes the unique positive solution
of 
\begin{equation*}
\left\{ 
\begin{array}{ll}
-\Delta _{p(x)}w_{1}=m_{1} & \ \mbox{in}\ \Omega , \\ 
w_{1}=0 & \ \mbox{on}\ \partial \Omega ,%
\end{array}%
\right.
\end{equation*}%
the maximum principle gives 
\begin{equation*}
u_{n}\geq w_{1}>0\quad \mbox{in}\quad \Omega .
\end{equation*}%
By the strong maximum principle (see \cite[Theorem 1.2]{FZZ}) we have $\frac{%
w_{1}}{\partial \eta }>0$ where $\eta $ is the inward normal vector of $%
\partial \Omega .$ Let $\phi _{q^{-}}$ an eigenfunction associated to the
first eigenvalue of the operator $(-\Delta _{q^{-}},W_{0}^{1,q^{-}}(\Omega
)) $. Note that 
\begin{equation}
P_{1}\phi _{q^{-}}(x)\leq w_{1}(x)
\end{equation}%
where $P_{1}$ is a positive constant that does not depend on $x\in \Omega .$

Denote $\phi _{p^{-}}$ an eigenfunction associated to the first eigenvalue
of the operator $(-\Delta _{p^{-}},W_{0}^{1,p^{-}}(\Omega ))$. Reasoning as
above, we also have $v_{n}\geq w_{2}>0$ and 
\begin{equation}
L_{1}\phi _{p^{-}}(x)\leq w_{2}(x)  \label{comparable-2}
\end{equation}%
where $L_{1}$ is a positive constant that does not depend on $x\in \Omega .$
with $w_{2}$ the unique positive solution of 
\begin{equation*}
\left\{ 
\begin{array}{ll}
-\Delta _{q(x)}w_{2}=m_{2} & \ \mbox{in}\ \Omega , \\ 
w_{2}=0 & \ \mbox{on}\ \partial \Omega 
\end{array}%
\right. 
\end{equation*}%
Let $\delta \in (0,1)$. By using $u_{n}$ as a test function in its
corresponding system of equations and arguing as in the set of inequalities (%
\ref{comp-2}) and (\ref{comp-3}) we get{\small 
\begin{align*}
\int_{\Omega }|\nabla u_{n}|^{p(x)}\ dx & \leq C\left( \int_{\Omega }\frac{%
|u_{n}|}{|v_{n}|^{\delta }}\ dx+\int_{\Omega }|u_{n}||v_{n}|^{\delta }\
dx+\Vert u_{n}\Vert _{L^{p(x)}(\Omega )} \right)\\
 &+ C\int_{\Omega}|u_{n}||v_{n}|^{\alpha (x)}\ dx ,
\end{align*}%
} where $C$ is a constant that depends on $\gamma $ and $\theta .$
Hardy-Sobolev inequality (see \cite{Kavian-thesis}), together with the
embedding $W_{0}^{1,p(x)}(\Omega )\hookrightarrow W_{0}^{1,p^{-}}(\Omega )$
and the relation (\ref{comparable-2}), it follows that 
\begin{equation}
\begin{aligned} \int_{\Omega} \frac{|u_n|}{|v_n|^{\delta}} \ dx &\leq
\int_{\Omega} \frac{|u_n|}{w_{2}^{\delta}} \ dx \leq \int_{\Omega}
\frac{|u_n|}{C \phi_{p^{-}}^{\delta}} \ dx \\ & \leq C \| \nabla
u_n\|_{L^{p(x)}(\Omega)}. \end{aligned}  \label{h-s-app}
\end{equation}%
where the constant $C$ does not depend on $n\in \mathbb{N}.$

By (\ref{h-s-app}) and using the reasoning that leads to (\ref{lead-1}) and (%
\ref{lead-2}), we obtain that $(u_{n},v_{n})$ is bounded in $E.$ Passing to
a subsequence we have

\begin{itemize}
\item $u_n \rightharpoonup u$ in $W^{1,p(x)}(\Omega),$

\item $u_n \rightarrow u$ in $L^{p(x)}(\Omega),$

\item $u_n \rightarrow u $ a.e in $\Omega,$

\item $v_n \rightharpoonup v$ in $W^{1,p(x)}(\Omega),$

\item $v_n \rightarrow v$ in $L^{p(x)}(\Omega),$

\item $v_{n}\rightarrow v$ a.e in $\Omega ,$
\end{itemize}

for some pair $(u,v)\in E.$ From the previous pointwise convergence and the
relations between $w_{1},$ $u_{n}$ and $w_{2},$ $v_{n},$ we conclude that $%
u>0$ and $v>0,$ which proves the claim.

\bigskip

Taking $u_{n}$ as a test function and repeating the arguments of the
relations \eqref{f-conv}-\eqref{f-conv-4}, we get that $u_{n}\rightarrow u$
in $W_{0}^{1,p(x)}(\Omega )$. Notice that the same argument provides that $%
v_{n}\rightarrow v$ in $W_{0}^{1,q(x)}(\Omega ).$

From the previous strong convergence of $u_{n}$ and $v_{n}$, combined with
the Lebesgue's Dominated Convergence Theorem, we obtain 
\begin{equation*}
\int_{\Omega }|\nabla u|^{p(x)-2}\nabla u\nabla \phi \ dx=\int_{\Omega
}-\gamma \phi \log v\ dx+\int_{\Omega }\theta \phi v\ dx
\end{equation*}%
and 
\begin{equation*}
\int_{\Omega }|\nabla v|^{p(x)-2}\nabla u\nabla \psi \ dx=\int_{\Omega
}-\gamma \psi \log u\ dx+\int_{\Omega }\theta \psi u\ dx
\end{equation*}%
for all $(\phi ,\psi )\in W_{0}^{1,p(x)}(\Omega )\times
W_{0}^{1,q(x)}(\Omega )$ and the existence of solution is proved. 
\section*{Acknowledgements} The work was started while the second and the
third author were visiting the Federal University of Campina Grande. They
thank professor Claudianor Alves and the other members of the department for
hospitality. 

\end{document}